\titleformat{\subsection}[runin]
       {\normalfont\bfseries}
       {\thesubsection}
       {0.5em}
       {}
       [.]
\newtheorem{theorem}{Theorem}[section]
\newtheorem{proposition}[theorem]{Proposition}
\newtheorem{conjecture}[theorem]{Conjecture}
\newtheorem{problem}[theorem]{Problem}
\theoremstyle{remark}
\newtheorem{remark}[theorem]{Remark}
\newtheorem{example}[theorem]{Example}
\numberwithin{equation}{section}
\newcommand{\R}{\mathbb R}
\newcommand{\Z}{{\mathbb Z}}
\newcommand{\N}{{\mathbb N}}
\newcommand{\C}{{\mathbb C}}
\newcommand{\Q}{{\mathbb Q}}
\begin{document}
\begin{center} 
{\Large Catalogue of modular relations for double zeta values}  \\ \vspace{2mm}
Koji Tasaka   \\ 

\vspace{2mm}

School of Information Science and Technology \\ Aichi Prefectural University 
\end{center}


\section{Introduction}

\subsection{Background}
At the conference, based on my works, I overviewed the studies of modular relations of multiple zeta values
\[ \zeta(k_1,\ldots,k_d):=\sum_{0<m_1<\cdots<m_d}\frac{1}{m_1^{k_1}\cdots m_d^{k_d}}\in \R \quad (k_1,\ldots,k_d\in \N, \ k_d\ge2).\]
As usual, a tuple $\boldsymbol{k}=(k_1,\ldots,k_d)$ of positive integers is called an index and its weight and depth are defined by ${\rm wt}(\boldsymbol{k})=k_1+\cdots+k_d$ and ${\rm dep}(\boldsymbol{k})=d$, respectively.
Roughly speaking, modular relations are (homogeneous) $\Q$-linear relations $ \sum_{{\rm wt}(\boldsymbol{k})=k} a_{\boldsymbol{k}}\, \zeta(\boldsymbol{k}) =0$ of multiple zeta values of fixed weight $k$ whose coefficients $a_{\boldsymbol{k}}\in \Q$ are `originated' from modular forms on (a subgroup of) the full modular group $\Gamma_1:={\rm SL}_2(\Z)$.
Several kinds of such relations, in particular, for double zeta values have been found up to now.
It is our purpose to review these results and also succinctly outline potential avenues for future research projects.

The existence of modular relations for double zeta values was initially noted by Zagier \cite{Zagier94,Zagier93}.
It was observed that for each positive even integer $k$, the values $\zeta(1,k-1),\zeta(3,k-3),\ldots,\zeta(k-3,3)$ and $\zeta(k)$ satisfy $\dim_\C M_k(\Gamma_1)$ relations over $\Q$, 
where $M_k(\Gamma_1)$ denotes the $\C$-vector space of modular forms of weight $k$ on $\Gamma_1$. 
Broadhurst and Kreimer \cite{BordhurstKreimer97} expanded upon Zagier's observation, extending it to higher depths, and then, Brown \cite{Brown14,Brown21} conducted further investigations, proposing more profound conjectural links between multiple zeta values and modular forms.
These conjectures also hint at modular relations for higher depths, but still remain open.
Some of the results in this direction (i.e.~depth $\ge3$) can be found in \cite{CarrGanglSchneps15,DMNW,EnriquezLochak16,Goncharov01,Li19,MaTasaka21,Tasaka16,Tasaka22} (we do not intend to present these works in this note).

A formulation of modular relations for double zeta values was first established by Gangl, Kaneko and Zagier \cite{GanglKanekoZagier06}.
It uses the space of even period polynomials, which by the theory of Eichler-Shimura is isomorphic to the space of modular forms on $\Gamma_1$.
The first example of their modular relations is simplified to the form
\begin{equation} \label{eq:GKZ12} 
28\zeta(3,9) + 150 \zeta(5,7) + 168\zeta(7,5) = \frac{5197}{691}\zeta(12).
\end{equation}
A key fact is that the above coefficients $28,150,168$ are derived from the cusp form $\Delta(z)=q\prod_{n\ge1}(1-q^n)^{24}$ of weight 12 on $\Gamma_1$ (later, we will describe how these coefficients are obtained).
The significance of their result is as follows.
In the purely algebraic setting (more precisely, using formal double zeta values), their modular relations in weight $k$ not only capture all linear relations among $\zeta(k)$ and $\zeta(odd_{\ge1},odd_{\ge3})$'s of weight $k$, but also can be regarded as another characterization of even period polynomials (and modular forms of weight $k$).
Due to these significant contributions to both multiple zeta values and modular forms, this study has received a substantial amount of interest and attention.

\subsection{Contents of this note}
This note provides a catalog focusing on the actual computation of known modular relations, and it is not our purpose to give an overview of proofs (refer to the original paper for proofs).
The contents of this note are as follows.

In \S2, we begin with modular relations for double zeta values $\zeta(odd_{\ge1},odd_{\ge3})$ due to Gangle, Kaneko and Zagier \cite{GanglKanekoZagier06} in 2006.
The relation they obtained is depicted in the formal double zeta space, defined by double shuffle relations for double zeta values.
We also mention Ma's result \cite{Ma16} in 2016, where modular relations for double zeta values $\zeta(odd_{\ge3},even_{\ge2})$ were obtained.
Every modular relation known so far has been formulated using period polynomials.
We call such relations ``period polynomial relations."

In \S3, a simpler formula for the above two modular relations coming from cusp forms (therefore, called a cuspidal relation in this note) due to Ma and the author in \cite{MaTasaka21} will be given.
Its natural lifting to double Eisenstein series obtained in \cite{Tasaka20} is also described.
I believe these results are worth discussing in the subsequent section where generalizations of period polynomial relations for various kinds of double zeta values are presented.

\S4 collects all results on period polynomial relations known so far.
Hirose \cite{Hirose23} explored further analogues to modular relations for double zeta values, using shuffle regularized double zeta values, consequently yielding several novel results.
Some of his results deal with period polynomials for congruence subgroups of level 2, while the double zeta values he uses are of level 1.
There are several variations on the multiple zeta values of level $N$.
Using one of them, Bachmann \cite{Bachmann20} derived a level 2 analogue of the modular relations established by Gangl, Kaneko and Zagier.
In contrast to Hirose, Bachmann only uses even period polynomials of level 1.
For $N\ge3$, we will also review modular relations for double zeta values of level $N$ recently developed by Kaneko-Tsumura \cite{KanekoTsumura} ($N=4$) and Hirose \cite{Hirose} ($N$: general).

It should be noted that there are several studies on the dual counterparts of modular relations, such as the Ihara-Takao relation, which are relatively easy (from experience) and even helpful to observe a correspondence with period polynomials (cf.~\cite{BaumardSchneps13,Hirose23,MaTasaka21,Tasaka16}).
Due to limitations in space and time, this note does not delve into these results.

\subsection{Notations}
Some standard notations we use are summarized below.
Let $\N$ be the set of a positive integer.
For $N\in \N$, the principal congruence subgroup of level $N$ is
\[ \Gamma(N) :=\left\{ \begin{pmatrix}a&b\\c&d \end{pmatrix} \in \Gamma_1 \ \middle| \ \begin{pmatrix}a&b\\c&d \end{pmatrix} \equiv \begin{pmatrix}1&0\\0&1 \end{pmatrix} \bmod{N}\right\}.\]
The following congruence subgroups of level $N$ (i.e. a subgroup of $\Gamma_1$ which contains $\Gamma(N)$) will appear:
\begin{align*}
 \Gamma_0(N) &:=\left\{ \begin{pmatrix}a&b\\c&d \end{pmatrix} \in \Gamma_1 \ \middle| \ \begin{pmatrix}a&b\\c&d \end{pmatrix} \equiv \begin{pmatrix}\ast&\ast \\0 &\ast \end{pmatrix} \bmod{N}\right\},\\
 \Gamma_1(N) &:=\left\{ \begin{pmatrix}a&b\\c&d \end{pmatrix} \in \Gamma_1 \ \middle| \ \begin{pmatrix}a&b\\c&d \end{pmatrix} \equiv \begin{pmatrix}1&\ast\\ 0&1 \end{pmatrix} \bmod{N}\right\},
\end{align*}
where ``$\ast$" means ``unspecified."
For such subgroup $\Gamma\subset \Gamma_1$, we denote by $M_k(\Gamma)$ and (resp.~$S_k(\Gamma)$) the $\C$-vector space of holomorphic modular (resp.~cusp) forms of weight $k$ on $\Gamma$.
These are finite-dimensional vector spaces.
For example, for $k\ge2$ even we have
\[ \dim_\C M_k(\Gamma_1) = \left[\frac{k}{4}\right]-\left[\frac{k-2}{6}\right],\]
and $\dim_\C S_k(\Gamma_1)=\dim_\C M_k(\Gamma_1)-1$.
For the dimension formula, see e.g.~\cite{Serre} for $N=1$ and \cite{DS} for $N$ general.

\section{Period polynomial relations for double zeta values}

\subsection{Period polynomials}

Let us recall the theory of period polynomials (cf.~\cite[\S5]{GanglKanekoZagier06}).

For $w\ge0$ even, denote by $V_w$ the $\Q$-vector space spanned by $X^aY^{w-a} \ (0\le a\le w)$.
For $P(X,Y)\in V_w$, we write
\[ \big(P\big| \gamma\big) (X,Y):= P(aX+bY,cX+dY), \quad \forall \gamma=\begin{pmatrix} a&b\\ c&d\end{pmatrix} \in {\rm PGL}_2(\Z),\]
where ${\rm PGL}_2(\Z):={\rm GL}_2(\Z)/\{\pm I\}$.
This action of ${\rm PGL}_2(\Z)$ on $V_w$ is extended to an action of the group ring $\Z[{\rm PGL}_2(\Z)]$ by linearity.
As usual, we set
\[ S=\begin{pmatrix} 0&-1\\1&0\end{pmatrix}, \  T=\begin{pmatrix} 1&1\\0&1\end{pmatrix}, \ U=TS=\begin{pmatrix} 1&-1\\1&0\end{pmatrix},\  \varepsilon=\begin{pmatrix} 0&1\\1&0\end{pmatrix}, \  \delta=\begin{pmatrix} -1&0\\0&1\end{pmatrix}.\]
The space of period polynomials is then defined by
\[ W_w := \{P\in V_w \mid P\big| (1+S)=P\big|(1+U+U^2)=0\}.\]

The group ${\rm PSL}_2(\Z):=\Gamma_1/\{\pm I\}$ is generated by $S$ and $T$ and satisfies the relations $S^2=U^3=I$.
Since we have $S=\varepsilon \delta=\delta\varepsilon$ and $\varepsilon U\varepsilon = U^2$, the space $W_w$ has the direct sum decomposition 
\[ W_w=W_w^+\oplus W_w^-,\quad \mbox{where}\quad W_w^{\pm}:=\{P\in W_w\mid P\big| \delta=\pm P\}.\]
By linear algebra, for $w\ge0$ even, we have 
\[\dim_\Q W_{w}^{+} = \left[\frac{w+2}{4}\right]-\left[\frac{w}{6}\right]\quad \mbox{and}\quad \dim_\Q W_{w}^{-} = \dim_\Q W_{w}^{+}-1.\] 
Note that the space $W_w^+$ of even period polynomials always contains the polynomial $X^w-Y^w$ and has the decomposition $\Q(X^w-Y^w)\oplus W_w^{+,0}$, where $W_w^{+,0}=\{P\in W_w^+\mid P(X,0)=0\}$.
For example, when $w=10$, the following polynomials are a basis of each space:
\begin{align*}
 &X^2Y^2(X^2-Y^2)^3 \in W_{10}^{+,0},\qquad XY (X^2-Y^2)^2 (4X^4-17 X^2Y^2+4Y^4)\in W_{10}^-.
\end{align*}

We now relate the period polynomials with cusp forms.
This is done by the Eichler-Shimura theory of periods of cusp forms (cf.~\cite{Lang76}).
For a cusp form $f\in S_k(\Gamma_1)$, let
\begin{align*}
P_f(X,Y)&:=\int_0^{i\infty} f(z)(X-Yz)^{k-2}dz=\sum_{\substack{r+s=k\\r,s\ge1}}(-1)^{s-1} \binom{k-2}{s-1} \Lambda(f;s) X^{r-1}Y^{s-1},
\end{align*}
where we set $\Lambda(f;s) :=\int_0^{i\infty} f(z)z^{s-1}dz$.
The values of $\Lambda(f;s) $ at $s=1,2,\ldots,k-1$ are called {\itshape critical}.
Using the transformation formula $\big(P_f\big|\gamma\big) (X,Y)=\int_{\gamma^{-1}(0)}^{\gamma^{-1}(i\infty)} f(z)(X-Yz)^{k-2}dz$ for $\gamma\in \Gamma_1$, one easily finds that $P_f\in W_{k-2}\otimes_\Q \C$ for any $f\in S_k(\Gamma_1)$, which implies that the critical values of $f$ satisfy many $\Q$-linear relations.
The Eichler-Shimura theory tells us that the linear maps 
\[ r^\pm : S_k(\Gamma_1) \longrightarrow W_{k-2}^\pm\otimes_\Q \C, \qquad f\longmapsto \frac12 P_f\big|(1\pm \delta)=:P_f^\pm (X,Y)\]
are injective. 
In particular, the map $r^-$ is an isomorphism due to the dimension formula:
\begin{align*}
 \dim_\C S_k(\Gamma_1)&=\dim_\Q W_{k-2}^-,\quad \dim_\C M_k(\Gamma_1)=\dim_\Q W_{k-2}^+.
\end{align*}

\begin{example}
Let us illustrate the period polynomial of the discriminant function $\Delta(z)=q\prod_{n\ge1}(1-q^n)^{24}\in S_{12}(\Gamma_1)$.
We have seen that the set $\{X^{10}-Y^{10},X^2Y^2(X^2-Y^2)^3\}$ forms a basis of $W_{10}^+$, so the polynomial $P_\Delta^+$ is a linear combination of them.
For the explicit description, we use Manin's result \cite{Manin73} on the ratio of critical values (e.g.~$\frac{\Lambda(\Delta;1)}{\binom{10}{2}\Lambda(\Delta;3)}=-\frac{36}{691}$
) to get
\begin{align}
\label{eq:ev_delta}
P_\Delta^+(X,Y)&=\binom{10}{2} \Lambda(\Delta;3)  \left\{ -\frac{36}{691}(X^{10}-Y^{10})+X^2Y^2(X^2-Y^2)^3\right\} .
\end{align}
Similarly, one can verify the identity
\begin{align}
\label{eq:od_delta}
P_\Delta^-(X,Y)&=-\binom{10}{1}\Lambda(\Delta;2) \ XY (X^2-Y^2)^2 \left(X^4-\frac{17}{4} X^2Y^2+Y^4\right).
\end{align}
\end{example}

There are generalizations of the maps $r^\pm$ for modular forms.
What we need is to extend the period polynomials of cusp forms to the Eisenstein series, because every modular forms is a linear combination of the Eisenstein series and cusp forms.
For one, see Zagier's extension \cite[\S2]{Zagier91}.
Another construction using $1$-cocycles was given by Brown \cite[\S7]{Brown14}.

\begin{remark} 
We make a remark on a calculation of period polynomials.
For $w\ge0$ even, let 
\[V_w^\pm:=\{P\in V_w\mid P\big|\delta=\pm P\}.\]
The defining equations of $W_w^\pm$ can be simplified to one equation in $V_w^\pm$.
We only recall the case $W_w^+$ (see \cite[\S5]{GanglKanekoZagier06} for the details):
\[ W_w^+=\{P\in V_w^+\mid P(X,Y)-P(Y-X,Y)+P(Y-X,X)=0\}.\]
A straightforward computation shows that, for an even polynomial $P\in V_{k-2}^+$, it lies in $W_{k-2}^+$ if and only if the coefficients $\sum_{r+s=k}a_{r,s}X^{r-1}Y^{s-1}=P$ satisfy
\[ \sum_{\substack{h+p=k\\h,p\ge1}} a_{h,p} C^p_{r,s}=0\qquad \mbox{for all $r,s\ge1$ with $r+s=k$},\]
where for positive integers $r,s,p$, we let
\begin{equation}\label{eq:C} 
C^p_{r,s} := \delta_{p,r} + (-1)^r \binom{p-1}{r-1}+(-1)^{p-s}\binom{p-1}{s-1}
\end{equation}
and $\delta_{p,r}$ is the Kronecker delta.
Notably, it was pointed out in \cite[Proposition 3.2]{BaumardSchneps13} (see also \cite[Proposition 3.4]{Tasaka16}) that the latter condition is reduced to ``for all $r,s\ge1$ odd with $r+s=k$."
To get a cuspidal polynomial $P$ in $W_{k-2}^{+}$, i.e. an element in the image $r^+\big(S_k(\Gamma_1)\big)$, we further impose the relation 
\[  \sum_{\substack{r+s=k\\r,s\ge1:{\rm odd}}} (r-1)!(s-1)! \lambda(r,s) a_{r,s}=0 ,\]
where for $k\ge3,r,s\ge1$ with $r+s=k$, we set $\beta(k) := -\frac{B_k}{2k!}$, where $B_k$ is the $k$th Bernoulli number, and
\begin{equation}\label{eq:beta} 
\begin{aligned}
\lambda(r,s):=-\frac{\beta(k)}{12} \left( 1 -(-1)^{s} \binom{k-1}{s-1}+(-1)^{s} \binom{k-1}{s} \right)-\frac{(-1)^{s}}{3} \sum_{j=2}^k \binom{j-1}{s-1} \beta(j) \beta(k-j).
\end{aligned}
\end{equation}
Indeed, for $f\in S_k(\Gamma_1)$, the coefficients $\sum_{r+s=k}a_{r,s}X^{r-1}Y^{s-1}=P_f^+(X,Y)$  satisfy the above relation, which was first proved by Kohnen and Zagier \cite[Theorem 9 (ii)]{KohnenZagier84} as a consequence of Haberland's formula for the Petersson inner product between the Eisenstein series and cusp forms.
Here our $\lambda(k-s,s)$ coincides with $\frac{\lambda_{k/2,s-1}}{24 k!}$, where $\lambda_{k,n}$ is given in \cite[Theorem 9]{KohnenZagier84}.
\end{remark}

\subsection{Double shuffle relations}
The double shuffle relation is a family of linear relations among multiple zeta values, resulting from two different representations of the product of two multiple zeta values.
We will use the following simplest case, which is also known as a special case of the regularized double shuffle relation (cf.~\cite{IharaKanekoZagier06}).

\begin{proposition}\label{prop:dsr}
For positive integers $r,s\ge2$, we have
\[\zeta(r,s)+\zeta(s,r)+\zeta(r+s) = \sum_{\substack{h+p=r+s\\p\ge\min\{r,s\}}} \left(\binom{p-1}{r-1}+\binom{p-1}{s-1}\right) \zeta(h,p).\]
Moreover, if we set $\zeta(r,1):=0$ for $r\ge2$, then the above relation also holds for $r,s\ge1$, $r+s\ge3$.
\end{proposition}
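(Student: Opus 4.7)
The plan is to compute the product $\zeta(r)\zeta(s)$ in two different ways and equate the results: the harmonic (``stuffle'') decomposition of the product gives the left-hand side of the claimed identity, while the iterated-integral (``shuffle'') decomposition gives the right-hand side.

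For $r,s\ge2$, I would first split the double sum
\[
\zeta(r)\zeta(s)=\sum_{m,n\ge1}\frac{1}{m^r n^s}
\]
according as $m<n$, $m>n$, or $m=n$; under the paper's convention $\zeta(k_1,k_2)=\sum_{0<m_1<m_2}m_1^{-k_1}m_2^{-k_2}$, this yields at once
\[
\zeta(r)\zeta(s)=\zeta(r,s)+\zeta(s,r)+\zeta(r+s).
\]
Next, I would invoke Kontsevich's iterated-integral representation $\zeta(k)=I(\omega_0^{k-1}\omega_1)$ with $\omega_0=dt/t$ and $\omega_1=dt/(1-t)$, together with the fact that the ordinary product of such integrals equals the integral of the shuffle product of the corresponding words. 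The task then reduces to enumerating the shuffles of $\omega_0^{r-1}\omega_1$ with $\omega_0^{s-1}\omega_1$ into words of length $r+s$. Every such shuffle contains exactly two occurrences of $\omega_1$: one at some position $p$ with $1\le p\le r+s-1$ and one at the final position $r+s$ (since both source words end in $\omega_1$), corresponding to the double zeta value $\zeta(h,p)$ with $h+p=r+s$. The number of shuffles producing a given such word splits according to which source word contributes the internal $\omega_1$: a lattice count gives $\binom{p-1}{r-1}$ and $\binom{p-1}{s-1}$ respectively. The restriction $p\ge\min\{r,s\}$ emerges automatically, since both binomials vanish below that bound.

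The main place care is required is precisely this shuffle count: one must be vigilant about the convention relating a word in $\omega_0,\omega_1$ to the index of the corresponding double zeta value, and about which of the two source words contributes the terminal $\omega_1$ in each shuffle. The combinatorics itself is elementary, and once these bookkeeping details are settled, equating the two expressions proves the identity for $r,s\ge2$.

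Finally, to extend the identity to the boundary cases, consider $r=1$, $s\ge2$ (the case $s=1$ is symmetric). The convention $\zeta(k,1):=0$ makes the left-hand side well-defined. Substituting $r=1$ into the proposed identity, the term $p=s$ on the right-hand side contributes $\binom{s-1}{s-1}\zeta(1,s)=\zeta(1,s)$, which cancels the $\zeta(1,s)$ on the left, and after using $\zeta(s,1)=0$ the remaining identity reduces to the classical depth-two Euler sum formula
\[
\sum_{\substack{a+b=s+1\\ a\ge1,\,b\ge2}} \zeta(a,b)=\zeta(s+1),
\]
which is standard and can be proved independently by partial-fraction decomposition of $1/(m^a n^b)$.
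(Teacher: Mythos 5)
Your argument is correct, and its first half (splitting $\sum_{m,n\ge 1}m^{-r}n^{-s}$ over $m<n$, $m>n$, $m=n$) is exactly the paper's. Where you diverge is in the second evaluation of $\zeta(r)\zeta(s)$: the paper does not use iterated integrals at all, but instead sums the elementary partial-fraction identity
\[
\frac{1}{m^rn^s} \;=\; \sum_{\substack{h+p=r+s\\h,p\ge1}} \left(\binom{p-1}{r-1}\frac{1}{n^h(m+n)^p}+\binom{p-1}{s-1}\frac{1}{m^h (m+n)^p} \right)
\]
over $m,n\ge1$, which produces the right-hand side directly. Your shuffle-product derivation reaches the same Euler decomposition formula (for the product of two depth-one values the shuffle relation and the partial-fraction decomposition coincide), but it costs you the Kontsevich integral representation and the shuffle theorem as prerequisites, plus the convention bookkeeping you rightly flag as the delicate point; the partial-fraction route is self-contained and purely about series. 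In exchange, your route makes the name ``double shuffle relation'' literal and connects naturally to the $J(a;b,c)$ formalism used later in the paper. The boundary case is handled identically in both proofs: reduce to Euler's sum formula $\zeta(s+1)=\sum_{a+b=s+1,\,b\ge2}\zeta(a,b)$ and quote it as known. One small slip in your bookkeeping there: the term $(h,p)=(1,s)$ on the right contributes $\bigl(\binom{s-1}{0}+\binom{s-1}{s-1}\bigr)\zeta(1,s)=2\zeta(1,s)$, not just $\zeta(1,s)$; one copy cancels the left-hand $\zeta(1,s)$ and the other is the $b=s$ term of the Euler sum, so your final reduction is still correct.
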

\begin{proof}
We compute $\zeta(r)\zeta(s)=\sum_{m,n>0}m^{-r} n^{-s}$ in two ways.
Firstly, the left-hand side is obtained by decomposing $\N^2$ into three domains $\{(m,n)\in \N^2\mid m<n\}$, $\{(m,n)\in \N^2\mid m>n\}$ and $\{(m,n)\in \N^2\mid m=n\}$.
Secondly, using the partial fractional expansion
\begin{equation}\label{eq:pfe} 
\frac{1}{m^rn^s} =  \sum_{\substack{h+p=r+s\\h,p\ge1}} \left(\frac{\binom{p-1}{r-1}}{n^h(m+n)^p}+\frac{\binom{p-1}{s-1}}{m^h (m+n)^p} \right),
\end{equation}
and then, summing over $m,n\ge1$ on both sides, we get the right-hand side.
The double shuffle relation for the case $r\ge2$ and $s=1$ is the same as Euler's sum formula $\zeta(r+1)=\sum_{p=2}^r \zeta(r+1-p,p)$, which is well-known (from the theory of the regularized double shuffle relation, one may set $\zeta(r,1)$ to be $-\zeta(1,r)-\zeta(r+1)$ for instance, but the obtained relation does not depend on the choice of $\zeta(r,1)$ except for ``$\zeta(r,1)=\infty$").
\end{proof}

Modular relations for double zeta values become more significant when formulated in terms of the formal double zeta space, introduced by Gangl, Kaneko and Zagier \cite{GanglKanekoZagier06}.
For each $k\in \N$, the formal double zeta space $\mathcal{D}_k$ is defined to be the $\Q$-vector space spanned by symbols $Z_k,Z_{r,s} \ (r+s=k, r\ge1,s\ge2)$ which are subject to the relations 
\[Z_{r,s}+Z_{s,r}+Z_{k} = \sum_{h+p=k} \left(\binom{p-1}{r-1}+\binom{p-1}{s-1}\right) Z_{h,p}\]
for all $r,s\ge1$ with $r+s=k$.
Hereafter, we use the convention that the condition ``$h+p=k$" (resp.~``$r+s=k$") includes ``$h,p\ge1$" (resp.~``$r,s\ge1$").
Since the above relations for the cases $r\ge s\ge 1$ are linearly independent, for each $k\ge3$ it follows that 
\[ \dim_\Q \mathcal{D}_k=\left[ \frac{k-1}{2}\right].\] 

By definition, there is the natural surjective linear map from $\mathcal{D}_k$ to the $\Q$-vector space $\mathcal{DZ}_k$ spanned by $\zeta(k)$ and all double zeta values of weight $k$.
This map is not injective in general when $k$ is even; According to Zagier's dimension conjecture \cite{Zagier94,Zagier93} stating that
\begin{equation}\label{eq:Zagier_conj} 
\dim_\Q \mathcal{D}_k - \dim_\Q \mathcal{DZ}_k \stackrel{?}{=} \dim_\C S_k (\Gamma_1) \quad (k\ge4),
\end{equation}
we might expect that the kernel of the map $\mathcal{D}_k\rightarrow \mathcal{DZ}_k$ is related to cusp forms on $\Gamma_1$.
\begin{center}
\begin{tabular}{c|cccccccccccccccc}
$k$ & 4&6& 8 & 10 & 12 & 14 & 16 & 18 & 20 & 22 & 24 & 26\\ \hline
$\left[ \frac{k-1}{2}\right] $ & 1 & 2 & 3 & 4 & 5 & 6 & 7 & 8 & 9 & 10 & 11 & 12 \\ \hline
$\dim S_{k}(\Gamma_1) $ & 0 & 0 & 0 & 0 & 1 & 0 & 1 & 1 & 1 & 1 & 2 &1\\ \hline
\end{tabular}
\end{center}

Let us elaborate on the conjecture \eqref{eq:Zagier_conj} when $k$ is even. 
It was shown in \cite[Theorem 2]{GanglKanekoZagier06} that every $Z_{even_{\ge2},even_{\ge2}}$ of weight $k$ is a $\Q$-linear combination of $Z_{odd_{\ge1},odd_{\ge3}}$'s and $Z_k$, where $Z_{even_{\ge2},even_{\ge2}}$ of weight $k$ means $Z_{r,s}$ with $r,s\ge2$ even and $r+s=k$.
Counting these generators, together with the restricted sum formula explained in Example \ref{ex:restricted_sum} below, we see that the set $\{Z_{k-s,s} \mid 3\le s\le k-1:{\rm odd}\}$ forms a basis of $\mathcal{D}_k$.
Therefore, the equality \eqref{eq:Zagier_conj} would show that the values $\zeta(odd_{\ge1},odd_{\ge3})$ of weight $k$ satisfy $\dim_\C S_k(\Gamma_1)$ relations over $\Q$.
Notably, this was first pointed out by Zagier in \cite{Zagier94}.

We also mention the case $k$ odd.
In this case, the equality \eqref{eq:Zagier_conj} implies that every linear relation of double zeta values is obtained from Proposition \ref{prop:dsr}.
We should have provided a clearer explanation, but an affirmative answer to this speculation is already indicated by Zagier in \cite[Theorem 2]{Zagier12}.

\subsection{Modular relation for even weight double zeta values}\label{subsec:GKZ}

In \cite{GanglKanekoZagier06}, the modular relation for double zeta values is stated as a special type of relations in the formal double zeta space $\mathcal{D}_k$.

\begin{theorem}\label{thm:GKZ}
Let $k\ge4$ even. 
For $P\in W_{k-2}^+$, define $a_{r,s} \in \Q \ (r,s\ge1)$ by
\[ \sum_{r+s=k} \binom{k-2}{r-1}a_{r,s}X^{r-1}Y^{s-1}:=P(X+Y,X).\]
Then, we have $a_{r,s}=a_{s,r} \ (r,s:$ even$)$, $a_{k-1,1}=0$ and 
\[ 3\sum_{\substack{r+s=k\\ r,s:{\rm odd}} } a_{r,s}Z_{r,s}= \sum_{\substack{r+s=k\\ r,s:{\rm even}} } a_{r,s}Z_{r,s} + \left(\sum_{r+s=k} (-1)^r a_{r,s}\right) Z_{k} .\]
Here we use the convention that the summation variables $r,s$ are $\ge1$.
\end{theorem}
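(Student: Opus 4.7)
The plan is to derive Theorem 2.2 directly from the defining relations of $W_{k-2}^+$ --- namely $P|\delta = P$, $P|(1+S)=0$, and $P|(1+U+U^2)=0$ --- together with the double shuffle relation of Proposition 2.4. Writing $P(X,Y)=\sum_{i+j=k-2}c_{i,j}X^iY^j$, the condition $P|\delta=P$ forces $c_{i,j}=0$ for $i$ odd, while $P|S=-P$ gives the antisymmetry $c_{j,i}=-c_{i,j}$. Expanding $P(X+Y,X)$ binomially and reading off the coefficient of $X^{r-1}Y^{s-1}$ yields the explicit formula
\[ \binom{k-2}{r-1}\,a_{r,s}=\sum_{i\ \mathrm{even}} c_{i,k-2-i}\binom{i}{s-1}. \]

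For the two auxiliary claims: setting $(X,Y)=(1,0)$ in the defining identity of the $a_{r,s}$ leaves only the $s=1$ term on the right, so $a_{k-1,1}=P(1,1)$; then the relation $P(X,Y)+P(-Y,X)=0$ at $(X,Y)=(1,1)$ combined with $P(-1,1)=P(1,1)$ yields $2P(1,1)=0$. For $a_{r,s}=a_{s,r}$ with $r,s$ even, I would substitute $(X+Y,X)$ for $(X,Y)$ in $P|(1+U+U^2)=0$ and use $P(-X,Y)=P(X,Y)$ to obtain the functional equation
\[ P(X,Y)+P(X+Y,X)+P(Y,X+Y)=0. \qquad (\ast) \]
Expanding $P(Y,X+Y)$ binomially and using $c_{j,i}=-c_{i,j}$ shows its coefficient of $X^{r-1}Y^{s-1}$ equals $-\binom{k-2}{r-1}\,a_{s,r}$. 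Reading off $X^{r-1}Y^{s-1}$ in $(\ast)$ then gives
\[ c_{r-1,s-1}=\binom{k-2}{r-1}\,(a_{s,r}-a_{r,s}); \]
for $r,s$ both even, $r-1$ is odd so $c_{r-1,s-1}=0$, forcing the symmetry.

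For the main identity, I would form the weighted sum $\sum_{r+s=k,\,r,s\ge1} a_{r,s}\, R_{r,s}$, where $R_{r,s}$ denotes the double shuffle relation of Proposition 2.4 (with the convention $Z_{r,1}:=0$, harmless since $a_{k-1,1}=a_{1,k-1}=0$ by the above). The stuffle side collects as $\sum_{r,s} a_{r,s}(Z_{r,s}+Z_{s,r})+\bigl(\sum_{r,s} a_{r,s}\bigr)Z_k$, while on the shuffle side the coefficient of $Z_{h,p}$ is $\sum_{r+s=k} a_{r,s}\bigl(\binom{p-1}{r-1}+\binom{p-1}{s-1}\bigr)$. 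The partial-fraction identity \eqref{eq:pfe}, reinterpreted at the polynomial level, is precisely the substitution that appears in the functional equation $(\ast)$; consequently the shuffle coefficient of each $Z_{h,p}$ can be identified with a parity-signed combination of the $a_{h,p}$'s coming from $P(X+Y,X)$, $P(Y,X+Y)$, and $P(X,Y)$ respectively. Using $(\ast)$ to fuse these three contributions, and then separating by the parity of $(h,p)$ --- with the even--even part collapsed by $a_{r,s}=a_{s,r}$ --- produces the stated relation; the factor $3$ on the odd--odd terms appears because $(\ast)$ rewrites each such contribution in three equivalent ways, mirroring the $1+U+U^2$ structure.

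The main obstacle will be the combinatorial identification in the last paragraph: recognizing the shuffle-side sum $\sum_{r+s=k} a_{r,s}\binom{p-1}{r-1}$ (and its companion) as an $a$-type coefficient, so that $(\ast)$ can be invoked to collapse three generating-function pieces into one. This requires aligning the polynomial content of the partial-fraction expansion with the $\Gamma_1$-action defining $W_{k-2}^+$ and tracking the parity-based signs carefully so that the diagonal $Z_k$ contribution comes out with the correct coefficient $\sum_{r} (-1)^r a_{r,s}$.
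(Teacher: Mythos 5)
Your preliminary steps are correct: the functional equation $P(X,Y)+P(X+Y,X)+P(Y,X+Y)=0$ does follow from $P\big|(1+U+U^2)=0$ together with $P\big|\delta=P$, the evaluation $a_{k-1,1}=P(1,1)=0$ is right, and the parity argument $c_{r-1,s-1}=\binom{k-2}{r-1}(a_{s,r}-a_{r,s})$ correctly forces $a_{r,s}=a_{s,r}$ for $r,s$ even. One slip there: $a_{1,k-1}$ is the coefficient of $X^{k-2}$ in $P$ and is generally nonzero (it equals $1$ for $P=X^{k-2}-Y^{k-2}$); this happens to be harmless because the coefficient of $Z_{k-1,1}$ comes out equal on the two sides of your weighted sum, but your stated justification for discarding $Z_{k-1,1}$ is wrong.

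The main identity has a genuine gap, and it sits exactly where you flagged it: the combination $\sum_{r+s=k}a_{r,s}R_{r,s}$ is a valid relation in $\mathcal{D}_k$, but it is \emph{not} the relation of the theorem, and the hoped-for identification of $A_p=\sum_{r+s=k}a_{r,s}\bigl(\binom{p-1}{r-1}+\binom{p-1}{s-1}\bigr)$ with a parity-signed multiple of $a_{k-p,p}$ is false. Test $P=X^{k-2}-Y^{k-2}$, for which $a_{r,s}=1$ ($s\ge2$) and $a_{k-1,1}=0$: one finds $A_p=2^p-1$ for $2\le p\le k-2$, which is not of the required form. Concretely, for $k=4$ your weighted sum is $R_{1,3}+R_{2,2}$ and yields $2Z_4=Z_{2,2}+5Z_{1,3}$, whereas the theorem asserts $3Z_{1,3}=Z_{2,2}$, which is the different combination $R_{2,2}-R_{1,3}$; so the weights $a_{r,s}$ are simply not the right ones. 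The structural obstruction is the normalizing factor $\binom{k-2}{r-1}$ in the definition of $a_{r,s}$: because of it, $\sum_r a_{r,s}\binom{p-1}{r-1}$ is not the coefficient of any monomial in a ${\rm PGL}_2(\Z)$-translate of $P$, so $(\ast)$ cannot be invoked to collapse the shuffle side. Note that the present note gives no proof (it defers to \cite[Theorem 3]{GanglKanekoZagier06}); the argument there packages all the $R_{r,s}$ into the single generating-function identity
\[
\mathfrak{Z}(X,Y)+\mathfrak{Z}(Y,X)+\frac{X^{k-1}-Y^{k-1}}{X-Y}\,Z_k=\mathfrak{Z}(X,X+Y)+\mathfrak{Z}(Y,X+Y),\qquad \mathfrak{Z}(X,Y):=\sum_{r+s=k}Z_{r,s}X^{r-1}Y^{s-1},
\]
and contracts it against $P$ via the ${\rm PGL}_2$-invariant pairing on $V_{k-2}$ (which is precisely where the binomial weights are absorbed); invariance transfers the substitutions from $\mathfrak{Z}$ onto $P$, where $P\big|(1+S)=P\big|(1+U+U^2)=0$ and $P\big|\delta=P$ produce the factor $3$ and the parity split. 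To salvage your approach you would have to replace the termwise weights $a_{r,s}$ by the coefficients of $P$ read through that pairing; as written, the sum does not close up.
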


\begin{example}\label{ex:restricted_sum}
The special case $P=X^{k-2}-Y^{k-2}$ in Theorem \ref{thm:GKZ} yields the relation
\begin{equation*}\label{eq:sum_12}
\begin{aligned}
&3 \sum_{\substack{1\le r\le k-3\\ r:{\rm odd}}} Z_{r,k-r}=\sum_{\substack{2\le r\le k-2\\ r:{\rm even}}} Z_{r,k-r}.
\end{aligned}
\end{equation*}
By Euler's sum formula $\sum_{r=1}^{k-2}Z_{r,k-r}=Z_k$ for $k\ge3$, the above relation is reduced to the ``restricted sum formulas"
\[ \sum_{\substack{1\le r\le k-3\\ r:{\rm odd}}} Z_{r,k-r}=\frac14 Z_k\quad \mbox{and}\quad \sum_{\substack{2\le r\le k-2\\ r:{\rm even}}} Z_{r,k-r}=\frac34 Z_{k}, \qquad (k\ge4: {\rm even}).\]
\end{example}

\begin{example}\label{ex:GKZ12}
Taking $P=145110\times \big(-\frac{36}{691}(X^{10}-Y^{10})+X^2Y^2(X^2-Y^2)^3\big)$, which by \eqref{eq:ev_delta} is proportional to the period polynomial $P_\Delta^+$ of the cusp form $\Delta(z)$, in Theorem \ref{thm:GKZ} gives
\begin{equation}\label{eq:GKZ_delta}
\begin{aligned}
&22680 Z_{1,11}+13006 Z_{3,9}-29145 Z_{5,7}-35364 Z_{7,5}+22680 Z_{9,3}\\
&=7560 Z_{2,10}-2114 Z_{4,8}-\frac{42965}{3} Z_{6,6}-2114 Z_{8,4}+7560 Z_{10,2}-1382 Z_{12}.
\end{aligned}
\end{equation}
Note that \eqref{eq:GKZ12} can be derived from \eqref{eq:GKZ_delta} under the correspondence $Z\mapsto \zeta$.
To see this, we need to use the harmonic product formula $\zeta(r,s)+\zeta(s,r)=\zeta(r+s)-\zeta(r)\zeta(s) $ for $r,s\in \Z_{\ge2}$ and Euler's formula $\zeta(2k)=-\frac{(-1)^{k+1}B_{2k}(2\pi )^{2k}}{2(2k)!} $ for $k\in \Z_{\ge1}$, which does not hold in $\mathcal{D}_{2k}$, where $B_{2k}$ is the $2k$th Bernoulli number.
Later, we will discuss a simplification of the right-hand side of \eqref{eq:GKZ_delta}, though this simplified relation cannot be true in the formal double zeta space.
\end{example}

Theorem \ref{thm:GKZ} says that the values $\zeta(k)$ and $\zeta(odd_{\ge1},odd_{\ge3})$'s of weight $k$ satisfy $\dim_\Q W_{k-2}^+$ relations over $\Q$. 
Hence the inequality $\dim_\Q \mathcal{D}_k - \dim_\Q \mathcal{DZ}_k \ge\dim_\C S_k (\Gamma_1) $ is valid for any $k\ge 4$ even.
This gives an affirmative answer to \eqref{eq:Zagier_conj}, because proving $\le$ (i.e.~linear independence of multiple zeta values) is a hard problem at present.

It is worth mentioning that the original statement in \cite[Theorem 3]{GanglKanekoZagier06} includes the opposite implication that provides another characterization of even period polynomials via regularized double shuffle relations of double zeta values. 

\subsection{Modular relation for odd weight double zeta values}

In \cite[Theorem 3]{Zagier12}, for each odd integer $k\ge5$, Zagier showed that the values $\zeta(odd_{\ge1},even_{\ge2})$'s of weight $k$ satisfy $[(k-5)/6]$ relations over $\Q$.
Since $[(k-5)/6] = \dim_\C S_{k-1}(\Gamma_1)+\dim_\C S_{k+1}(\Gamma_1)$ for $k\ge5$ odd, they are expected to be related to cusp forms in two ways.
This expectation was revealed by Ma \cite[Theorems 1 and 2]{Ma16}.


\begin{theorem}\label{thm:Ma}
Let $k\ge12$ even.
\begin{itemize}
\item[{\rm (i)}] For $P\in W_{k-2}^-$, let 
\[ \sum_{r+s=k}\binom{k-1}{r-1}b_{r,s}X^{r-1}Y^{s-1}:=P(X+Y,Y)-\frac{X}{Y}P(X+Y,X) .\]
Then we have $b_{1,k-1}=b_{k-1,1}=0$ and
\[ \sum_{\substack{r+s=k\\r,s:{\rm odd}}} b_{r,s}Z_{r,s+1}\equiv 0\mod \Q Z_{k+1} .\]
\item[{\rm (ii)}] For $P\in W_{k-2}^{+}$, let
\[ \sum_{r+s=k-1}\binom{k-3}{r-1}c_{r,s}X^{r-1}Y^{s-1}:=\frac{d}{dX}P(X+Y,Y)-\frac{d}{dY}P(X+Y,X) . \]
Then we have $c_{1,k-2}=c_{k-3,2}=0$ and
\[ \sum_{\substack{r+s=k-1\\r:{\rm odd}}} c_{r,s}Z_{r,s}\equiv 0\mod \Q Z_{k-1} .\]
\end{itemize}
\end{theorem}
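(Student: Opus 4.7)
My plan is to deduce both parts of Theorem \ref{thm:Ma} along the lines of Gangl, Kaneko and Zagier's proof of Theorem \ref{thm:GKZ}, adapted to the weight shift by one. Namely, I would weight the formal double shuffle relations of Proposition \ref{prop:dsr} in weight $k+1$ (for part (i)) and weight $k-1$ (for part (ii)) by the coefficients $b_{r,s}$ or $c_{r,s}$ read off from $P$, and then sum. The defining relations $P|(1+S)=0$, $P|(1+U+U^2)=0$, together with $P|\delta=\mp P$, should organize the weighted sum of right-hand sides so that all contributions other than the target sum and an explicit multiple of $Z_{k+1}$ (respectively $Z_{k-1}$) cancel.

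For part (i), write $P(X,Y)=\sum_{a+b=k-2}p_{a,b}X^aY^b$. The condition $P|\delta=-P$ forces $p_{a,b}=0$ unless $a$ (and hence, by $a+b=k-2$ even, also $b$) is odd. The vanishing $b_{1,k-1}=0$ is then immediate, since the coefficient of $X^0Y^{k-2}$ in $P(X+Y,Y)-\tfrac{X}{Y}P(X+Y,X)$ reduces to $P(Y,Y)$, which is zero because $P$ vanishes on the diagonal. The vanishing $b_{k-1,1}=0$ is obtained by extracting the coefficient of $X^{k-2}Y^0$: this equals $p_{k-2,0}-P_X(1,1)$, whose first term is zero by the parity conclusion above and whose second term vanishes upon differentiating $P|(1+U+U^2)=0$ with respect to $X$ and evaluating at $(1,1)$, using $P|(1+S)=0$ to simplify the intermediate expression.

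For the main congruence in part (i), apply Proposition \ref{prop:dsr} in weight $k+1$ with $(r,s)$ replaced by $(r,s+1)$ for each $r,s\ge1$ odd with $r+s=k$, multiply by $b_{r,s}$, and sum. The essential step is to reinterpret the resulting weighted sum of right-hand sides as a single pairing of the polynomial $P(X+Y,Y)-\tfrac{X}{Y}P(X+Y,X)$ against a bivariate generating series of the symbols $Z_{h,p}$, in the style of the identity (\ref{eq:C}) which encodes the $W_w^+$ condition. Under this interpretation, the operators $1+S$ and $1+U+U^2$ act on the generating side through natural substitutions, and the vanishing of $P$ under these operators makes all $Z_{h,p}$ contributions with $(h,p)\ne(r,s+1)$ collapse up to an explicit multiple of $Z_{k+1}$. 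The condition $P|\delta=-P$ simultaneously ensures that the $Z_{s+1,r}$ terms arising on the left of the double shuffle relation are absorbed into this cancellation rather than surviving as independent contributions.

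Part (ii) is handled by the same scheme in weight $k-1$: the derivative combination $\tfrac{d}{dX}P(X+Y,Y)-\tfrac{d}{dY}P(X+Y,X)$ replaces the prefactor $X/Y$, dropping the degree of $P$ by one to match the weight shift from $k$ to $k-1$, and the parity condition $P|\delta=+P$ takes over the role previously played by $P|\delta=-P$. I expect the main obstacle to be the combinatorial reorganization of the weighted sum of double shuffle right-hand sides into a pairing manifestly annihilated by $(1+S)$ and $(1+U+U^2)$ acting on $P$; this requires binomial coefficient manipulations analogous to those behind (\ref{eq:C}), rendered more delicate in part (ii) by the extra differentiation.
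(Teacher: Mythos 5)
The paper itself does not prove Theorem \ref{thm:Ma}; it is a survey and explicitly defers all proofs to the original sources, here Ma \cite{Ma16} (and the note even states in \S1.2 that ``it is not our purpose to give an overview of proofs''). So your proposal can only be judged against Ma's argument, whose general framework --- pairing weighted formal double shuffle relations against a period polynomial and using $P|(1+S)=P|(1+U+U^2)=0$ together with the $\delta$-eigenvalue to force cancellation --- you have correctly identified. Your verification of $b_{1,k-1}=b_{k-1,1}=0$ is essentially sound: oddness of $P$ under $\delta$ kills $p_{k-2,0}$, the diagonal vanishing $P(Y,Y)=0$ follows from $P|(1+U+U^2)=0$ combined with $P(X,0)=P(0,Y)=0$, and $P_X(1,1)=0$ follows from differentiating the relations as you indicate.

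The genuine gap is that the entire substance of the theorem --- the claim that the weighted sum of right-hand sides of Proposition \ref{prop:dsr} in weight $k+1$ (resp.\ $k-1$) collapses to the target sum modulo $\Q Z_{k+1}$ (resp.\ $\Q Z_{k-1}$) --- is asserted by analogy with Theorem \ref{thm:GKZ} rather than proved; you yourself flag it as ``the main obstacle.'' This analogy is not automatic. In the even-weight GKZ situation the pairing is between a polynomial of degree $k-2$ and the generating series $\sum_{r+s=k}Z_{r,s}X^{r-1}Y^{s-1}$ of matching degree, and the substitutions $(X,Y)\mapsto(X+Y,X)$ etc.\ intertwine cleanly with the double shuffle functional equation. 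In your setting the weight is shifted by one, the prefactor $X/Y$ (resp.\ the differentiation) breaks homogeneity in a way that must be matched against a generating series in a \emph{different} weight, and the leftover terms $Z_{s+1,r}$ are of type $Z_{\mathrm{even},\mathrm{odd}}$, which in odd weight are not a priori absorbed into the span of the $Z_{\mathrm{odd},\mathrm{even}}$ targets and $Z_{k+1}$: controlling them requires the explicit structure of $\mathcal{D}_{2n+1}$ (every element reduces to $Z_{2n+1}$ and the symmetric combinations $P_{r,s}$, cf.\ \cite{Zagier12}), and one must then verify that the coefficients $b_{r,s}$, $c_{r,s}$ annihilate precisely the $P_{r,s}$-part. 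That verification is the actual content of Ma's two theorems and involves nontrivial binomial identities that your sketch does not supply. You also leave the vanishing $c_{1,k-2}=c_{k-3,2}=0$ in part (ii) unexamined; the case $c_{k-3,2}$ in particular does not follow from the same diagonal argument as in part (i) and needs its own computation. As it stands, the proposal is a plausible strategy outline rather than a proof.
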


Notice that taking $P=X^{k-2}-Y^{k-2}\in W_{k-2}^+$ in Theorem \ref{thm:Ma} (ii), we get $c_{r,s}=0$ for all $r+s=k-1$, so only the subspace $W_{k-2}^{+,0}$ provides non-trivial relations.

\begin{example}
The special case $P=X^2Y^2(X^2-Y^2)^3 \in W_{10}^{+,0}$ in Theorem \ref{thm:Ma} (ii) and the special case $P=XY (X^2-Y^2)^2 (4X^4-17 X^2Y^2+4Y^4)\in W_{10}^-$ 
in Theorem \ref{thm:Ma} (i) give
\begin{align}
\label{eq:odd_11} &14Z_{3,8} + 10 Z_{5,6} - 21Z_{7,4} = -\frac32 Z_{11}, \\
\label{eq:odd_13} &12Z_{3,10} + 14 Z_{5,8} - 5 Z_{7,6} -18 Z_{9,4}= -\frac32 Z_{13} ,
\end{align}
respectively.
Here the coefficients of $Z_{r,s}$'s in the above relations are normalized so that they are coprime integers.
\end{example}

As a consequence of Theorem \ref{thm:Ma}, we see that double zeta values $\zeta(odd_{\ge3},even_{\ge4})$ of weight $k$ and $\zeta(k)$ satisfy $\dim_\C S_{k+1}(\Gamma_1)$ relations (resp.~$\dim_\C S_{k-1}(\Gamma_1)$ relations), whose coefficients are obtained from odd period polynomials (resp.~`derivative' of even period polynomials).
Since two relations obtained from Theorem \ref{thm:Ma} (i) and (ii) are independent, for each $k\ge5$ odd, one has
\[ \dim_\Q \langle \zeta(k),\zeta(odd_{\ge3},even_{\ge4}) \ \mbox{of weight $k$}\rangle_\Q\le \frac{k-3}{2}  - \dim_\C S_{k+1}(\Gamma_1) -\dim_\C S_{k-1}(\Gamma_1).\]
\begin{center}
\begin{tabular}{c|cccccccccccccccc}
$k$ & 5&7& 9 & 11 & 13 & 15 & 17 & 19 & 21 & 23 & 25 & 27\\ \hline
$\frac{k-3}{2} $ & 1 & 2 & 3 & 4 & 5 & 6 & 7 & 8 & 9 & 10 & 11 & 12 \\ \hline
$\dim S_{k+1}(\Gamma_1) $ & 0 & 0 & 0 & 1 & 0 & 1 & 1 & 1 & 1 & 1 & 2 &1\\ \hline
$\dim S_{k-1}(\Gamma_1) $ & 0 & 0 & 0 & 0 & 1 & 0 & 1 & 1 & 1 & 1 & 1 & 2 \\ \hline
\end{tabular}
\end{center}

Using the motivic set-up, Li and Liu \cite{LiLiu} proved that Theorem \ref{thm:Ma} provides all $\Q$-linear relations among $Z_{k}$ and $Z_{odd_{\ge3},even_{\ge2}}$'s of weight $k\ge5$ odd.

\

\begin{problem}
Can we characterize odd period polynomials by regularized double shuffle relations?
For example, do the relation in $\mathcal{D}_{k+1}$ of the form $\sum_{\substack{r+s=k\\r,s:{\rm odd}}} b_{r,s}Z_{r,s+1}\equiv 0\mod \Q Z_{k+1}$ with $b_{1,k-1}=b_{k-1,1}$ characterize all odd period polynomial of degree $k-2$?
\end{problem}

\section{Cuspidal relations for double zeta values}
\subsection{Cuspidal part of modular relations}
Recall that the coefficient of the single zeta value in the relation of both Theorem \ref{thm:GKZ} and Theorem \ref{thm:Ma} under the correspondence $Z\mapsto \zeta$ is computable from their results (see the discussion in Example \ref{ex:GKZ12}).
A simple formula for the coefficient was given in \cite{MaTasaka21} when the corresponding period polynomial is cuspidal.
Surprisingly, this is done by just replacing double zeta values $\zeta(r,s)$ with Yamamoto's $\frac12$-interpolated double zeta values $\zeta^{\frac12}(r,s):=\zeta(r,s)+\frac12 \zeta(r+s)$ (see \cite{Yamamoto}).

\begin{theorem}\label{thm:Refinement}
For a cusp form $f\in S_k(\Gamma_1)$, define numbers $a_{r,s},b_{r,s},c_{r,s}$ by
\begin{align*}
&\sum_{r+s=k} \binom{k-2}{r-1}a_{r,s}^f X^{r-1}Y^{s-1}:=P_f^+(X+Y,X),\\
&\sum_{r+s=k}\binom{k-1}{r-1}b_{r,s}^f X^{r-1}Y^{s-1}:=P_f^-(X+Y,Y)-\frac{X}{Y}P_f^-(X+Y,X),\\
&\sum_{r+s=k-1}\binom{k-3}{r-1}c_{r,s}^f X^{r-1}Y^{s-1}:=\frac{d}{dX}P_f^+(X+Y,Y)-\frac{d}{dY}P_f^+(X+Y,X) .
\end{align*}
Then we have $a_{r,s}^f=a_{s,r}^f \ (r,s:$ even$)$, $a_{k-1,1}^f=0$, $b_{1,k-1}^f=b_{k-1,1}^f=0$, $c_{1,k-2}^f=c_{k-3,2}^f=0$ and
\begin{align*}
&\sum_{\substack{r+s=k\\ r,s:{\rm odd}} } a_{r,s}^f\zeta^{\frac12}(r,s)=0,\ 
\sum_{\substack{r+s=k\\r,s:{\rm odd}}} b_{r,s}^f\zeta^{\frac12}(r,s+1)= 0,\
\sum_{\substack{r+s=k-1\\r:{\rm odd}}} c_{r,s}^f\zeta^{\frac12}(r,s)=0 .
\end{align*}
\end{theorem}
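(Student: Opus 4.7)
The plan is to deduce Theorem \ref{thm:Refinement} by specializing Theorems \ref{thm:GKZ} and \ref{thm:Ma} to $P = P_f^\pm$, pushing the formal relations forward to actual multiple zeta values, and showing that the residual coefficient of the single zeta value vanishes when $f$ is cuspidal by invoking the Kohnen--Zagier cuspidality condition recorded in \eqref{eq:beta}.

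For part (i), apply Theorem \ref{thm:GKZ} to $P = P_f^+$ and send $Z_{r,s}\mapsto\zeta(r,s)$, adopting the convention $\zeta(k-1,1):=0$, which is consistent because $a^f_{k-1,1}=0$. The result is
\[
3\sum_{\substack{r+s=k\\ r,s:{\rm odd}}} a^f_{r,s}\,\zeta(r,s)
= \sum_{\substack{r+s=k\\ r,s:{\rm even}}} a^f_{r,s}\,\zeta(r,s)
+ \Big(\sum_{r+s=k}(-1)^r a^f_{r,s}\Big)\zeta(k).
\]
Using the symmetry $a^f_{r,s}=a^f_{s,r}$ on even indices, the harmonic product $\zeta(r,s)+\zeta(s,r)=\zeta(r)\zeta(s)-\zeta(k)$ for $r,s\ge 2$, and Euler's evaluation of $\zeta(2n)$, the even--even sum on the right collapses to a rational multiple of $\zeta(k)$. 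Substituting $\zeta(r,s)=\zeta^{1/2}(r,s)-\tfrac12\zeta(k)$ on the left recasts the identity as
\[
3\sum_{\substack{r+s=k\\ r,s:{\rm odd}}} a^f_{r,s}\,\zeta^{1/2}(r,s) = \Phi(f)\,\zeta(k),
\]
where $\Phi(f)\in\Q$ is built explicitly from the $a^f_{r,s}$ and Bernoulli numbers.

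The crux is then to identify $\Phi(f)$, up to a nonzero rational constant, with the Kohnen--Zagier functional $\sum_{r+s=k,\,r,s:{\rm odd}}(r-1)!(s-1)!\lambda(r,s)A^f_{r,s}$ of \eqref{eq:beta}, where $A^f_{r,s}$ denote the coefficients of $P_f^+$ itself. Unfolding the substitution $(X,Y)\mapsto(X+Y,X)$ converts the $a^f_{r,s}$ into explicit binomial combinations of the $A^f_{r,s}$, reducing the task to a combinatorial identity among binomial sums and Bernoulli numbers. Once established, cuspidality of $f$ forces $\Phi(f)=0$ and part (i) follows. Parts (ii) and (iii) follow the same template applied to Theorem \ref{thm:Ma}(i) and (ii). There the formal identity holds only modulo $\Q Z_{k\pm 1}$, so one first traces the proof of Theorem \ref{thm:Ma} to extract the actual $\Q$-coefficient of $\zeta(k\pm 1)$ on passing to $\mathcal{DZ}_{k\pm 1}$; since $k\pm 1$ is odd no Euler reduction is needed, and the passage $\zeta\mapsto\zeta^{1/2}$ merely shifts that coefficient by $\tfrac12\sum b^f_{r,s}$ or $\tfrac12\sum c^f_{r,s}$, reducing once more to a cuspidal-functional matching.

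The main obstacle is precisely this matching. The substitutions $(X,Y)\mapsto(X+Y,X)$ and $(X,Y)\mapsto(X+Y,Y)-(X/Y)(X+Y,X)$, together with the derivative variant entering $c^f_{r,s}$, spread the coefficients of $P_f^\pm$ across many indices in a non-transparent way, and verifying agreement with Kohnen--Zagier's functional by direct combinatorics is delicate. The cleaner route is likely to invoke Haberland's formula for the Petersson pairing $\langle f, E_k\rangle$, which is the genuine mechanism behind the Kohnen--Zagier identity, rather than chase binomial coefficients term by term.
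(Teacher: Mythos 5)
Your reduction is sound as far as it goes: specializing Theorem \ref{thm:GKZ} to $P=P_f^+$, collapsing the even--even part by the harmonic product and Euler's evaluation, and passing to $\zeta^{\frac12}$ does correctly reduce part (i) to the vanishing of an explicit rational functional $\Phi$ on $W_{k-2}^+\otimes\C$. But the proof stops exactly at the point where the theorem actually lives. Your identification of $\Phi$ with (a multiple of) the Kohnen--Zagier functional $\sum_{r,s\ {\rm odd}}(r-1)!(s-1)!\,\lambda(r,s)\,A^f_{r,s}$ is asserted, not proved, and as stated it is circular: both functionals are nonzero on $X^{k-2}-Y^{k-2}$, so their proportionality on $W_{k-2}^+$ is \emph{equivalent} to the claim that $\Phi$ kills the cuspidal subspace, i.e.\ to Theorem \ref{thm:Refinement}(i) itself. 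The entire content of the theorem is the combinatorial/analytic identity you defer (``reducing the task to a combinatorial identity among binomial sums and Bernoulli numbers \dots delicate''), and neither that identity nor the alternative Haberland-formula route is carried out. Note also that the actual proof in \cite{MaTasaka21} does not run in this direction: it goes through Brown's motivic multiple zeta values, and the coincidence of the Gangl--Kaneko--Zagier Bernoulli realization $\beta(r,s)$ with $\beta(r)\beta(s)-\beta(k)/2-\lambda(r,s)$ is obtained there as a \emph{byproduct}, not used as an input. Your route is plausible but its feasibility is precisely the unverified matching.

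Parts (ii) and (iii) have a similar problem in a milder form. There the weight $k\pm1$ is odd, the identities $\sum b^f_{r,s}Z^{\frac12}_{r,s+1}=0$ and $\sum c^f_{r,s}Z^{\frac12}_{r,s}=0$ already hold in the formal double zeta space, and no Kohnen--Zagier or cuspidality input is needed; so ``reducing once more to a cuspidal-functional matching'' is not the right mechanism. What is needed instead is the explicit determination of the coefficient of $Z_{k\pm1}$ left unspecified in Theorem \ref{thm:Ma} (which only gives a congruence modulo $\Q Z_{k\pm1}$), and ``tracing the proof of Theorem \ref{thm:Ma}'' is again the whole work, not a step you have performed. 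In summary: the skeleton is a legitimate strategy, but the load-bearing computations in all three parts are missing.
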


Kohnen and Zagier \cite{KohnenZagier84} showed that there is a basis of the space of cusp forms such that their critical values (at the same parity) are rational numbers.
Hence the above relations can be over $\Q$.

\begin{example}
Taking $f=\Delta(z)$ in Theorem \ref{thm:Refinement} and then multiplying by certain constants, we get the identities
\begin{align}
\label{eq:cuspidal_rel}0=&22680 \zeta^{\frac12}(1,11)+13006 \zeta^{\frac12}(3,9)-29145 \zeta^{\frac12}(5,7)-35364 \zeta^{\frac12}(7,5)+22680 \zeta^{\frac12}(9,3),\\
\notag 0=&14\zeta^{\frac12}(3,8) + 10 \zeta^{\frac12}(5,6) - 21\zeta^{\frac12}(7,4) ,\\
\notag 0=&12\zeta^{\frac12}(3,10) + 14\zeta^{\frac12}(5,8) - 5\zeta^{\frac12}(7,6) -18\zeta^{\frac12}(9,4),
\end{align}
where the last two identities are equivalent to \eqref{eq:odd_11} and \eqref{eq:odd_13} (and hold in the formal double zeta space, replacing $\zeta$ with $Z$), but the first identity needs an extra work to get \eqref{eq:GKZ_delta} (and does not hold in $\mathcal{D}_{12}$).
As another example, using the unique cusp form $f= q+216q^2-3348q^3+\cdots \in S_{16}(\Gamma_1)$, we can get
\begin{align*}
0=&1081080 \zeta^{\frac12}(1,15)+842358 \zeta^{\frac12}(3,13)-275295 \zeta^{\frac12}(5,11)-1400182
   \zeta^{\frac12}(7,9)\\
   &-1360395 \zeta^{\frac12}(9,7)-351252 \zeta^{\frac12}(11,5)+1081080\zeta^{\frac12}(13,3) ,\\
0=&22 \zeta^{\frac12}(3,12)+30 \zeta^{\frac12}(5,10)+7 \zeta^{\frac12}(7,8)-20 \zeta^{\frac12}(9,6)-33
   \zeta^{\frac12}(11,4),\\
0=&156 \zeta^{\frac12}(3,14)+242 \zeta^{\frac12}(5,12)+153 \zeta^{\frac12}(7,10)-56 \zeta^{\frac12}(9,8)-215  \zeta^{\frac12}(11,6)-234 \zeta^{\frac12}(13,4).
\end{align*}
\end{example}

\begin{remark}
Our proof of the first identity in Theorem \ref{thm:Refinement} relies heavily on the theory of motivic multiple zeta values developed by Brown \cite{Brown12} (see \cite{MaTasaka21} for the details). 
As a byproduct of our proof, we found the coincidence between a rational solution to the double shuffle equation of depth 2 due to Gangl, Kaneko and Zagier \cite{GanglKanekoZagier06} and the coefficients $\lambda(r,s)$ defined in \eqref{eq:beta} in the extra relation of critical values of a cusp form due to Kohnen and Zagier \cite{KohnenZagier84}. 
More precisely, for $r,s\ge1$, $r+s=k$ even, let 
\[\beta(r,s):=\beta(r)\beta(s) - \frac{\beta(k)}{2} - \lambda(r,s) .\]
Then $\beta$ coincides with the Bernoulli realization of $\mathcal{D}_k$ given in \cite{GanglKanekoZagier06}, which is one of solutions to the double shuffle equation of depth 2.
Namely, $\beta(r,s)+\beta(s,r)+\beta(k) = \sum_{h+p=k} \left(\binom{p-1}{r-1}+\binom{p-1}{s-1}\right) \beta(h,p)$ holds for any $r,s\ge1$, $r+s=k\ge3$.
Note that other solutions to the double shuffle equation of depth 2 were found by Brown \cite{Brown17} and by Ma and the author \cite{MaTasaka21}.
These solutions are unique up to solutions to the linearized double shuffle equation modulo product of depth 2.
\end{remark}

\subsection{Double Eisenstein series interpretation}
The concept of multiple Eisenstein series was first introduced by Gangl, Kaneko and Zagier \cite{GanglKanekoZagier06}.
It is a holomorphic function on the upper half-plane having the Fourier expansion whose constant term is a multiple zeta value.
Since the space spanned by multiple Eisenstein series contains the space of modular forms on $\Gamma_1$ (cf.~\cite[Theorem 5]{GanglKanekoZagier06}), expressing modular forms in terms of linear combinations of multiple Eisenstein series and then comparing the $q$-expansions, we get linear relations of multiple zeta values.
This machinery gives a naive exposition why modular forms yield linear relations of multiple zeta values.
The purpose of this section is to understand Theorem \ref{thm:GKZ} (or rather, Theorem \ref{thm:Refinement}) from this viewpoint.

For $k_1,\ldots,k_{d-1}\in \Z_{\ge2}$ and $k_d\in\Z_{\ge3}$ (for convergence) and $z$ in the complex upper half-plane, the multiple Eisenstein series $G_{k_1,\ldots,k_d}(z)$ is defined by
\[G_{k_1,\ldots,k_d}(z) := \sum_{\substack{0\prec \lambda_1\prec \cdots\prec \lambda_d\\ \lambda_i\in\Z z+\Z}} \frac{1}{\lambda_1^{k_1}\cdots \lambda_d^{k_d}},\]
where a lattice point $\lambda=\ell z+m \in \Z z+\Z$ on the complex plain is said to be positive (denoted by $0\prec\lambda$) if either $\ell>0$ or $\ell=0$ and $m>0$ holds.
Then, for $\ell_1z+m_1,\ell_2 z+m_2\in  \Z z+\Z$, we define the order $\ell_1z+m_1\prec \ell_2z+m_2$ if $0\prec(\ell_2-\ell_1)z+(m_2-m_1)$.
The case $r=1$ and $k_1\ge4$ even is the classical Eisenstein series $G_{k_1}(z)$, which is modular.
By definition, the multiple Eisenstein series satisfies the harmonic product formula (e.g., $G_r(z)G_s(z)=G_{r,s}(z)+G_{s,r}(z)+G_{r+s}(z)$ for $r,s\in \Z_{\ge3}$).

Let us illustrate the Fourier expansion.
Using the Lipschitz formula 
\[\sum_{m\in \Z} (z+m)^{-k}=\frac{(-2\pi i)^k}{(k-1)!} \sum_{n>0} n^{k-1}q^n \ (k\ge2)\] 
with $q:=e^{2\pi iz}$, one can compute the Fourier expansion of $G_{k}(z)$ for $k\ge3$ as follows.
\begin{align*}
G_k(z)&= \sum_{\substack{\ell =0\\m>0}}\frac{1}{m^k} + \sum_{\substack{\ell>0\\ m\in\Z}} \frac{1}{(\ell z+m)^k}\\
&=\zeta(k) + \frac{(-2\pi i)^{k}}{(k-1)!} \sum_{n>0} \sigma_{k-1}(n) q^n ,
\end{align*}
where $\sigma_{k-1}(n)=\sum_{d\mid n}d^{k-1}$ is the divisor function.
Similarly, in \cite{GanglKanekoZagier06}, they computed the Fourier expansion of double Eisenstein series $G_{r,s}(z)$ for $r\ge2,s\ge3$:
\begin{equation}\label{eq:des}
\begin{aligned}
G_{r,s}(z)& = \zeta(r,s)+ \zeta(r)g_s(z) + g_{r,s}(z)\\
&+ \sum_{\substack{h+p=r+s\\ p\ge\min\{r,s\}}} \left( (-1)^r\binom{p-1}{r-1}+(-1)^{p-s}\binom{p-1}{s-1}\right) \zeta(p)g_h(z),
\end{aligned}
\end{equation}
where for positive integers $k_1,\ldots,k_d$, we define the holomorphic function $g_{k_1,\ldots,k_d}(z)$ on the upper half-plane by
\[g_{k_1,\ldots,k_d}(z) := \frac{(-2\pi i)^{k_1+\cdots+k_d}}{(k_1-1)!\cdots (k_d-1)!} \sum_{\substack{0<\ell_1<\cdots<\ell_d\\ n_1,\ldots,n_d>0}} n_1^{k_1-1}\cdots n_d^{k_d-1} q^{\ell_1n_1+\cdots+\ell_d n_d}.\]

It should be noted here that the above formula \eqref{eq:des} is deeply related to even period polynomials, pointed out by Kaneko in \cite{Kaneko07}.
Indeed, recalling \eqref{eq:C}, we get $G_{r,s}(z)=\zeta(r,s)+g_{r,s}(z) + \sum_{h+p=r+s} C_{r,s}^p \zeta(p)g_h(z)$.
More significantly, the Fourier expansion has an intimate connection with the Goncharov coproduct on the space of formal iterated integrals (see \cite{BachmannTasaka18}).

The concept of a `regularization' cam be applied to multiple Eisenstein series.
The above formulas for the Fourier expansion converge rapidly and give holomorphic functions of $z$ for all $k_1,\ldots,k_d\ge2$, to which the definition of $G_{k_1,\ldots,k_d}(z)$ can be slightly extended (we can ensure that they still satisfy the harmonic product formula).
Moreover, there are extensions of $G_{k_1,\ldots,k_d}(z)$ to all positive integers $k_1,\ldots,k_d$ while preserving certain families of relations: the shuffle product formulas \cite{BachmannTasaka18} and the harmonic product formulas \cite{Bachmann19}.
In this section, we use the one given by Gangl, Kaneko and Zagier \cite[\S7]{GanglKanekoZagier06}, which satisfies regularized double shuffle relations for all $r,s\ge1$ with $r+s\ge3$.

For $r,s\ge1$, set
\[ \varepsilon_{r,s}(z):=2\pi i \big( \delta_{s,2}g_r^\ast(z)-\delta_{s,1}g_{r-1}^\ast (z) + \delta_{r,1} \big( g_{s-1}^\ast (z) + g_s(z) \big)\big) + \delta_{r,1}\delta_{s,1} g_2(z),\]
where $g_k^\ast (z) := \frac{-(-2\pi i)^{k+1}}{k!} \sum_{\ell,n>0} \ell n^k q^{\ell n} \ (k\ge0)$, and define
\begin{align*}
G^{\rm reg}_{r,s}(z) &:=\zeta(r,s)+ \zeta(r)g_s(z) + g_{r,s}(z)+\frac12 \varepsilon_{r,s}(z)\\
&+ \sum_{h+p=r+s} \left( (-1)^r\binom{p-1}{r-1}+(-1)^{p-s}\binom{p-1}{s-1}\right) \zeta(p)g_h(z)
\end{align*}
with $\zeta(r,1):=-\zeta(r+1)-\zeta(1,r)$ for $r\ge2$ and $\zeta(1):=0$.
Then, for any $r,s\ge1$ with $r+s=k\ge3$, we have $G^{\rm reg}_{r,s}(z)+G^{\rm reg}_{s,r}(z)+G_{k}(z) = \sum_{h+p=k} \left(\binom{p-1}{r-1}+\binom{p-1}{s-1}\right) G^{\rm reg}_{h,p}(z)$.
Moreover, from \eqref{eq:des}, it follows that $G^{\rm reg}_{r,s}(z)=G_{r,s}(z)$ for $r,s\ge2$.
Using this, we let
\[G_{r,s}^\frac12(q):=G_{r,s}^{\rm reg}(q)+\frac12 G_{r+s}(q)\]
for $r,s\ge1$ with $r+s\ge3$.

To understand the cuspidal relations, we observe two facts:
The set $\{G_{r,s}^\frac12(z) \mid r+s=k, r,s:{\rm odd}\}$ forms a basis of the $\C$-vector space spanned by $G_k(z)$ and $G_{r,s}^{\rm reg}(z)$ of weight $k$:
Every cusp form can be uniquely written as a $\C$-linear combination of this basis.
Giving an explicit formula for such a combination, we can recover the cuspidal relations for $\zeta^{\frac12}(odd,odd)$'s.

\begin{theorem}\label{thm:Hecke}{\rm \cite[Theorem 1]{Tasaka20}}
For a normalized Hecke eigenform $f\in S_k(\Gamma_1)$, let $a_{r,s}^f$ be as in Theorem \ref{thm:Refinement}.
Then we have
\begin{equation*}
\sum_{\substack{r+s=k\\r,s\ge1:{\rm odd}}} a_{r,s}^f G_{r,s}^\frac12(z)  = \frac{(2\pi i)^k}{4(k-2)!} \Lambda(f;1)f(z) .
\end{equation*}
\end{theorem}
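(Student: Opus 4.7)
The plan is to show that the left-hand side, call it $F(z)$, is a cusp form of weight $k$ on $\Gamma_1$, identify it as a scalar multiple of the normalized Hecke eigenform $f$ (using the one-dimensionality of Hecke eigenspaces in $S_k(\Gamma_1)$ relative to a Hecke eigenbasis), and then pin down the scalar by comparing a single Fourier coefficient.

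First I would verify that $F(z)$ has vanishing constant term in its $q$-expansion. The formula $G_{r,s}^{\frac12}(z) = G_{r,s}^{\mathrm{reg}}(z) + \tfrac12 G_{r+s}(z)$ together with the Fourier expansion \eqref{eq:des} (and its regularization through $\varepsilon_{r,s}$) shows that the constant term of $G_{r,s}^{\frac12}$ is precisely $\zeta(r,s) + \tfrac12 \zeta(r+s) = \zeta^{\frac12}(r,s)$. Hence the constant term of $F$ is $\sum_{r+s=k,\, r,s\,\mathrm{odd}} a_{r,s}^f \zeta^{\frac12}(r,s)$, which vanishes by the first identity in Theorem \ref{thm:Refinement}.

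Next I would prove that $F(z)$ is modular of weight $k$ for $\Gamma_1$. The $T$-invariance $F(z+1)=F(z)$ is automatic from the $q$-expansion. For $S$-invariance $F(-1/z) = z^k F(z)$, one examines the modular defect of each $G_{r,s}^{\mathrm{reg}}$, which is not itself modular. The crucial observation is that the cocycle measuring this defect takes polynomial values, and when summed against the coefficients $a_{r,s}^f$, the resulting polynomial obstruction is precisely $P_f^+\bigl|(1+S)$, which vanishes by the defining relations of $W_{k-2}^+$. The analogous vanishing for $U=TS$ invokes $P_f^+\bigl|(1+U+U^2)=0$. This is the step that genuinely ties the construction to the period polynomial of $f$, and I would model the bookkeeping on the cocycle picture used by Brown in \cite{Brown14} and adapted to the double Eisenstein series in the framework of Gangl--Kaneko--Zagier.

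Having established that $F \in S_k(\Gamma_1)$, I would compare $F$ with $f$. Since $f$ is a normalized Hecke eigenform and $a_{r,s}^f$ depends $\Q$-linearly on $f$ through the critical $L$-values $\Lambda(f;s)$, the assignment $f\mapsto F$ is Hecke-equivariant; multiplicity one then forces $F = c\,f$ for some constant $c$. To determine $c$, I would compare the coefficient of $q^1$ on both sides. Observe that $a_{1,k-1}^f = \Lambda(f;1)$, which follows from $P_f^+(Y,0) = \Lambda(f;1)\,Y^{k-2}$ and the substitution $X=0$ in the defining expansion of $a_{r,s}^f$. Tracking the $q^1$-contributions of $\zeta(r)g_s$, the term $\tfrac12 \varepsilon_{r,s}$, the sum $\sum \zeta(p)g_h$, and $\tfrac12 G_{r+s}$ through the Lipschitz formula $\sum_m (z+m)^{-k} = \frac{(-2\pi i)^k}{(k-1)!}\sum_{n>0} n^{k-1} q^n$ yields, after simplification, the asserted constant $c = \frac{(2\pi i)^k}{4(k-2)!}\Lambda(f;1)$. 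The hardest part will be the modularity argument: correctly encoding the failure of $G_{r,s}^{\mathrm{reg}}$ to transform under $S$ as a polynomial-valued cocycle, and recognizing that the contraction against $(a_{r,s}^f)_{r,s\,\mathrm{odd}}$ is governed exactly by the relations cutting out $W_{k-2}^+$; the subsequent combinatorial identities involving $\varepsilon_{r,s}$ and the regularized double shuffle conventions are intricate but essentially mechanical once the framework is set up.
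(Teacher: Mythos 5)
Your overall strategy (show the left-hand side is a cusp form, then identify it) is not the paper's route, and as written it has two genuine gaps. The first and most serious is the modularity step. You assert that the failure of $G_{r,s}^{\rm reg}$ to transform under $S$ is a polynomial-valued cocycle whose contraction against $(a_{r,s}^f)$ is $P_f^+\big|(1+S)$. But the $S$-defect of an individual double Eisenstein series is \emph{not} a polynomial: it is built from the lattice points whose ordering $\prec$ changes under the basis change, and it involves lower-depth Eisenstein-type series multiplied by periods, not just elements of $V_{k-2}$. Making your cocycle picture rigorous is essentially the content of Brown's multiple modular values in depth two, and is far from ``mechanical bookkeeping''; nothing in your sketch shows why the non-polynomial parts of the defect cancel. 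The cited proof in \cite{Tasaka20} avoids this entirely by running the argument in the opposite direction: it starts from Popa's rational decomposition \cite{Popa} of $\Lambda(f;1)f$ as a combination of products $G_rG_s$ of genuine (modular) Eisenstein series, rewrites each product via the harmonic product $G_rG_s=G_{r,s}+G_{s,r}+G_{r+s}$, and then uses Theorem \ref{thm:GKZ} together with the Kohnen--Zagier relation \cite{KohnenZagier84} to convert the result into the stated combination of $G_{r,s}^{\frac12}$ with $r,s$ odd. Modularity never has to be proved for the double Eisenstein combination; it is inherited from $f$.

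The second gap is the identification $F=cf$. You invoke multiplicity one via an asserted Hecke-equivariance of $f\mapsto F$, but this equivariance is exactly what would need proving: it requires that the passage from Manin's Hecke action on period polynomials to the double Eisenstein side intertwines the Hecke operators, which is not established anywhere in your argument. Note also that the left-hand side of the theorem is linear in $f$ while the right-hand side is quadratic in $f$ (through $\Lambda(f;1)f$), so the statement is intrinsically an eigenform statement and cannot follow from a purely linear construction plus one Fourier coefficient unless $\dim S_k(\Gamma_1)=1$; the eigenform hypothesis must enter substantively, and in the actual proof it enters through Popa's formula, which is the reason the theorem is stated only for Hecke eigenforms. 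Your computations of the constant term of $G_{r,s}^{\frac12}$ and of $a_{1,k-1}^f=\Lambda(f;1)$ are correct and would be useful for the final normalization, but the core of the theorem is not reached by your outline.
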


\begin{example}
After multiplication by a constant (recall that the ratio $\Lambda(\Delta;s)/\Lambda(\Delta;s')$ of critical values is rational if $s\equiv s'\bmod2$), we get
\begin{equation}\label{eq:example}
\begin{aligned}
\frac{(2\pi i)^{12}}{680} \Delta(z) &= 22680 G_{9,3}^\frac12 (z)-35364 G_{7,5}^\frac12 (z) -29145 G_{5,7}^\frac12 (z) \\
&+13006 G_{3,9}^\frac12 (z) +22680 G_{1,11}^\frac12 (z),
\end{aligned}
\end{equation}
whose constant term yields the cuspidal relation \eqref{eq:cuspidal_rel}.
As another example, let $f=q+216q^2-3348q^3+\cdots $ be the unique normalized Hecke eigenform in $S_{16}(\Gamma_1)$.
Then we have
\begin{align*} 
\frac{(2\pi i)^{16}}{322560} f(z)&=1081080 G^{\frac12}_{1,15}(z)+842358 G^{\frac12}_{3,13}(z)-275295 G^{\frac12}_{5,11}(z)-1400182 G^{\frac12}_{7,9}(z)\\
   &-1360395 G^{\frac12}_{9,7}(z)-351252 G^{\frac12}_{11,5}(z)+1081080 G^{\frac12}_{13,3}(z) .
   \end{align*}
\end{example}

The proof of Theorem \ref{thm:Hecke} is done by combining Theorem \ref{thm:GKZ}, Kohnen-Zagier's extra relations of critical values \cite{KohnenZagier84} and Popa's decomposition formulas \cite{Popa} for Hecke eigenforms in terms of two product of the Eisenstein series.
The reason why Theorem \ref{thm:Hecke} holds only for cuspidal Hecke eigenforms is that Popa's formula is so.
In contrast, for odd weight, since Ma's relation (Theorem \ref{thm:Ma}) holds in the formal double zeta space, we can obtain
\[ \sum_{\substack{r+s=k\\r,s:{\rm odd}}} b_{r,s}^f G^{\frac12}_{r,s+1}(z)= 0,\
\sum_{\substack{r+s=k-1\\r:{\rm odd}}} c_{r,s}^f G^{\frac12}_{r,s}(z)=0, \]
where, for a cusp form $f$ of weight $k$, the above coefficients $b_{r,s}^f$ and $c_{r,s}^f$ are given in Theorem \ref{thm:Refinement} .

\section{Generalizations of period polynomial relations for double zeta values}

\subsection{Period polynomial relations for shuffle regularized double zeta values}
For $a,b,c\in \Z_{\ge0}$, Hirose \cite{Hirose23} studied the shuffle regularized double zeta value
\begin{equation}\label{ef:def_J}
\begin{aligned}
 J(a;b,c) &:= I_{\rm dch}(0^a,1,0^b,1,0^c)\\
 &=(-1)^a\sum_{\substack{i+j=a\\i,j\ge0}} \binom{b+i}{i}\binom{c+j}{j} \zeta(b+i+1,c+j+1),
 \end{aligned}
 \end{equation}
 where we mean $0^a=\underbrace{0,\ldots,0}_a$ and $I_{\rm dch}(a_1,\ldots,a_k)$ is an iterated integral of $\wedge_{j=1}^k \omega_{a_j}(t_j)$ with $\omega_0(t)=dt/t, \ \omega_1(t)=dt/(1-t)$, along the straight line path ${\rm dch}$ from the tangential basepoints $0'$ to $1'$. 
Note that $J(0;r-1,s-1)=\zeta(r,s)$.

A key observation is that the modular relation \eqref{eq:GKZ12} is in this setting written as follows:
\[ 28J(0;2,8) + 150 J(0;4,6) + 168J(0;6,4)=\frac{5197}{691}\zeta(12).\] 
Analogous to this, $\Q$-linear relations, for example, among $J(even;0,even)$'s, among $J(odd;1,odd)$'s, among $J(even;even,0)$'s and so on, are studied.
Some of them are given as follows.

\begin{theorem}\label{thm:Hirose}{\rm \cite[Theorem 18; (1.5), (1.6), (1.7), (1.9)]{Hirose23}}
For $w\ge2$ even, we have
\begin{align*}
{\rm (i)} \ & \sum_{a+b=w} p_{a,b}X^{a}Y^{b}\in W_{w,\Gamma_A}^{-}\Longrightarrow \sum_{a+b=w} a!b!\ p_{a,b}J(a;b,0)\equiv 0, \\
{\rm (ii)} \ & \sum_{a+b=w} p_{a,b}X^{a}Y^{b}\in W_{w}^+ \Longrightarrow \sum_{a+b=w} a!b!\ p_{a,b}J(a;b,0)\equiv 0, \\
{\rm (iii)} \ & \sum_{a+b=w} p_{a,b}X^{a}Y^{b}\in W_{w}^+ \Longrightarrow \sum_{\substack{a+b=w\\a,b:{\rm even}}} a!b!\ q_{a,b}J(a;0,b)\equiv 0, \\
{\rm (iv)} \ & \sum_{a+b=w} p_{a,b}X^{a}Y^{b}\in W_{w}^- \Longrightarrow \sum_{\substack{a+b=w\\a,b:{\rm odd}}} a!b!\ q_{a,b}J(a;1,b)\equiv 0.
\end{align*}
Here the convention we used is as follows.
The summation variables $a,b$ are integers $\ge0$.
The number $q_{a,b}$ on the right are defined by $\sum_{a+b=w} q_{a,b}X^{a}Y^{b} := P(X+Y,X)$ for $P\in V_{w}$.
The congruence is modulo the single zeta value $\zeta(k)$.
The space $W_{w,\Gamma_A}^{-} $ is the $\Q$-vector space of odd period polynomials corresponding to modular forms of weight $w+2$ on the congruence subgroup $\Gamma_A = \Gamma(2)\sqcup U\Gamma(2)\sqcup U^2 \Gamma(2)$ of level 2:
\begin{align*}
W_{w,\Gamma_A}^{-}&:=\{ P\in V_{w}^-\mid P\big|({1+U+U^2})=P\big|({S+SU+SU^2})=0\}.
 \end{align*}
\end{theorem}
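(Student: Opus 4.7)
The plan is to reinterpret each of the four claimed relations as the vanishing (modulo $\zeta(k)$, where $k=w+2$) of a single iterated integral on $\mathbb{P}^1\setminus\{0,1,\infty\}$ built from the polynomial $P$, and to deduce that vanishing from the defining equations of the appropriate period polynomial space via the standard symmetries of $I_{\rm dch}$.

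First I would package the $J$-values by introducing, for each of the four cases, a pairing
\[\langle P, \Psi\rangle := \sum_{a+b=w} a!b!\, p_{a,b}\, \Psi(a,b),\]
with $\Psi(a,b)=J(a;b,0)$ for (i) and (ii), $\Psi(a,b)=J(a;0,b)$ for (iii), $\Psi(a,b)=J(a;1,b)$ for (iv), and with $p_{a,b}$ replaced by $q_{a,b}$ in (iii) and (iv). Expanding each $J$ through \eqref{ef:def_J} rewrites $\langle P,\Psi\rangle$ as a $\Q$-linear combination of $\zeta(r,s)$'s whose coefficients are linear in the entries of $P$ (or of $P(X+Y,X)$ in the last two cases). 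The appearance of the substitution $P(X,Y)\mapsto P(X+Y,X)$ in (iii)--(iv) is exactly the transform already used in Theorem \ref{thm:GKZ} and Theorem \ref{thm:Ma}, suggesting that these generalizations share a common mechanism with the earlier period polynomial relations.

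The central step is to realize $\langle P,\Psi\rangle$ as an iterated integral $I_{\rm dch}(\Omega_P)$ along the straight path from the tangential base-point $0'$ to $1'$, where $\Omega_P$ is a word in $\omega_0=dt/t$ and $\omega_1=dt/(1-t)$ whose coefficients depend polynomially on $P$. I would then study how $\Omega_P$ transforms under the two natural symmetries of such iterated integrals: the reflection $t\mapsto 1-t$, which implements $\varepsilon$ (hence $S=\varepsilon\delta$), and the composition of tangential paths around the triangle $0\to 1\to \infty\to 0$, which implements the three-cycle $U$. A calculation using only change-of-variables and the shuffle product should show that $\Omega_{P|S}$ and $\Omega_{P|(1+U+U^2)}$ differ from the corresponding $\Omega_P$-images by words whose iterated integral is, after shuffle regularization, a rational multiple of $\zeta(k)$. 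The defining equations of $W_w^+$, $W_w^-$, or $W^-_{w,\Gamma_A}$ then force $\langle P,\Psi\rangle \equiv 0 \pmod{\zeta(k)}$ in each of the four cases. The level-$2$ group $\Gamma_A$ in part (i) enters precisely because the word $(0^a,1,0^b,1)$ has its final letter at the endpoint $1'$, so only the weaker invariance under $\Gamma_A$ survives regularization, whereas in (ii)--(iv) the integrand retains the full level-$1$ symmetry.

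The step I expect to be hardest is the regularization bookkeeping. Writing $\langle P,\Psi\rangle = I_{\rm dch}(\Omega_P)$ is legitimate only after carefully treating the logarithmic divergences at $0'$ and $1'$, and one must verify that the would-be shuffle products $\zeta(1)\zeta(k-1)$ and analogous terms that formally arise from regularization combine to give \emph{exactly} a rational multiple of $\zeta(k)$, with no surviving products of lower-weight zetas. Making this precise, and then tracking how the location of the degenerate letters in the word (i.e.~the choice of $\Psi$) dictates which of the four period-polynomial invariance conditions is the correct one to impose, is the delicate calculation that truly separates the four statements of the theorem.
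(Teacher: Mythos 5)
The paper itself contains no proof of this theorem --- it is quoted from Hirose's article, and the survey explicitly defers all proofs to the original sources --- so the only question is whether your outline would actually close. It would not, for a concrete reason: the entire content of the theorem is concentrated in your sentence that ``a calculation using only change-of-variables and the shuffle product should show'' that $\Omega_{P|S}$ and $\Omega_{P|(1+U+U^2)}$ differ from $\Omega_P$ by words integrating to rational multiples of $\zeta(k)$, and that calculation does not exist in the form you describe. The reflection $t\mapsto 1-t$ sends $I_{\rm dch}(0^a,1,0^b,1,0^c)$ to $\pm I_{\rm dch}(1^c,0,1^b,0,1^a)$, an iterated integral of depth $a+b+c$ rather than depth $2$, and the order-three rotation of $\mathbb{P}^1\setminus\{0,1,\infty\}$ pulls $\omega_0,\omega_1$ back to forms with poles at all three punctures, scattering the word across all depths. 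Neither symmetry preserves the span of the $J(a;b,c)$'s, so the $S$- and $U$-cocycle conditions on $P$ cannot be matched termwise against these geometric operations. The mechanism that actually works --- and the one consistent with the survey's remark that Hirose's converse implications hold for motivic iterated integrals $I^{\mathfrak{m}}$ --- is the one sketched in \S4.3 of this note for the $\widetilde{T}$-values: a depth-$2$ combination is a rational multiple of $\zeta^{\mathfrak{m}}(k)$ if and only if its reduced Goncharov coproduct vanishes, the depth-$2$ part of that coproduct is given by explicit binomial coefficients of the type $C^p_{r,s}$ in \eqref{eq:C}, and matching the resulting linear system with the period-polynomial equations is precisely where $W_w^{\pm}$ and $W_{w,\Gamma_A}^{-}$ enter.

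A second, independent sign that your organizing principle is off: you attribute the appearance of the level-$2$ space $W_{w,\Gamma_A}^{-}$ in (i) to the shape of the word $(0^a,1,0^b,1)$ having its final letter at the endpoint, but (ii) concerns exactly the same values $J(a;b,0)$ --- hence the same word --- and uses the level-$1$ space $W_w^{+}$. The distinction between (i) and (ii) is the parity eigenspace ($-$ versus $+$), not the position of the degenerate letters, so the heuristic you rely on to separate the four cases cannot be correct. Until the coproduct (or an equivalent regularized double-shuffle) computation is carried out and matched case by case against the four spaces, including the $\zeta(k)$-bookkeeping you yourself flag as open, the proposal remains a plan rather than a proof.
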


An important point to note here is that Hirose's original results provide the opposite implication $(\Leftarrow)$, replacing $I_{\rm dch}$ with the motivic iterated integrals $I^{\mathfrak{m}}$.
Namely, for example, in the motivic setting, the relations obtained from Theorem \ref{thm:Hirose} (i) generate all $\Q$-linear relations among $J(odd_{\ge1};odd_{\ge1},0)$'s of weight $k$ modulo $\Q\zeta(k)$.
One missing thing in his results is the explicit formula for the coefficient of $\zeta(k)$.
In view of this situation, we provide these coefficients in the following examples (which can be computed by combining known linear relations of multiple zeta values).

\begin{example}
Let us illustrate a few examples of relations in Theorem \ref{thm:Hirose} (i).
Bases of $W_{k-2,\Gamma_A}^{-}$ for $k=6,8,10$ are given by $ W_{4,\Gamma_A}^{-}=\Q XY(X^2 - Y^2), \ W_{6,\Gamma_A}^{-} = 0, \  W_{8,\Gamma_A}^{-} = \Q XY(X^6-2X^4Y^2+2X^2Y^4-Y^6)$.
Non-trivial corresponding relations are
\begin{align*} 
&J(1;3,0)-J(3;1,0) =\frac{11}{6} \zeta(6),\\
&7J(1;7,0)-2J(3;5,0)+2J(5;3,0)-7 J(7;1,0)=\frac{29}{2}\zeta(10).
\end{align*}
Moreover, substituting \eqref{ef:def_J} into the above relations, we obtain
\begin{align*}
\frac{11}{6} \zeta(6)=&\zeta(2,4)+2\zeta(3,3)+2\zeta(4,2),\\
\frac{29}{2}\zeta(10)=&7\zeta(2,8)+14\zeta(3,7)+19\zeta(4,6)+20\zeta(5,5)+17\zeta(6,4)+14\zeta(7,3)+14\zeta(8,2).
\end{align*}
\end{example}

\begin{example}\label{ex:Hirose}
Using \eqref{ef:def_J}, we can recast Theorem \ref{thm:Hirose} (ii) as follows.
For $P\in W_{k-2}^+$, let $a_{r,s}$ be as in Theorem \ref{thm:GKZ}.
Then we have
\[ \sum_{r+s=k} a_{r,s} \, \zeta(r,s) \equiv 0 \bmod{\Q\zeta(k)}.\]
For example, we have
\begin{equation*}
\begin{aligned} 
\frac{6248}{691}\zeta(12)&=14{\zeta}(3,9)+42{\zeta}(4,8)+75{\zeta}(5,7)+95{\zeta}(6,6)+84{\zeta}(7,5)+42{\zeta}(8,4),\\
\frac{185656}{3617}\zeta(16)&=66 {\zeta}(3,13)+198 {\zeta}(4,12)+375 {\zeta}(5,11)+555 {\zeta}(6,10)+686
   {\zeta}(7,9)\\
 &+728 {\zeta}(8,8)+675 {\zeta}(9,7)+555 {\zeta}(10,6)+396
   {\zeta}(11,5)+198 {\zeta}(12,4).
\end{aligned}
\end{equation*}
\end{example}

\begin{example}
Taking $P=X^2Y^2(X^2-Y^2)^3\in W_{10}^+$ in Theorem \ref{thm:Hirose} (iii), we get
\[ 14J(2;0,8) + 75 J(4;0,6) + 84J(6;0,4) = \frac{59246}{691} \zeta(12).\]
Also, the special case $P=XY (X^2-Y^2)^2 (4X^4-17 X^2Y^2+4Y^4)\in W_{10}^-$ in Theorem \ref{thm:Hirose} (iv) gives
\[ 48 J(1;1,9) + 119 J(3;1,7) +10J(5;1,5) -144J(7;1,3)=640\zeta(13).\]
\end{example}

\begin{problem}
Find explicit formulas for the coefficients of $\zeta(k)$ in Theorem \ref{thm:Hirose}.
\end{problem}

\begin{problem}
Are there corresponding relations to Theorem \ref{thm:Hirose} in the formal double zeta space?
\end{problem}

\subsection{Period polynomial relations for double zeta values of level 2}
For positive integers $r\ge1,s\ge2$, Bachmann \cite{Bachmann20} studied the double zeta value
\[ \hat{\zeta}(r,s) := \sum_{0<m<n}\frac{1}{(m+n)^rn^s},\]
the special case of Apostol-Vu double zeta values or Witten zeta functions for $\mathfrak{so}(5)$.
Note that the value $\hat{\zeta}(r,s)$ is written in terms of double zeta values of level 2.
For example, one has $\hat{\zeta}(r,s)=2^{s-1}\big(Li\tbinom{1,-1}{r,s}+\zeta(r,s)\big) - \zeta(r,s)-\zeta(r+s)$ (see \cite[(4.6)]{Bachmann20}), where we set
\[ Li\tbinom{z_1,z_2}{k_1,k_2} := \sum_{0<m_1<m_2}\frac{z_1^{m_1}z_2^{m_2}}{m_1^{k_1}m_2^{k_2}}.\]

Using a $q$-analogue of $ \hat{\zeta}(r,s)$ which can be viewed as a holomorphic function on the complex upper half-plane, 
Bachmann showed a similar result to Theorem \ref{thm:Hecke}, i.e.,  explicit formulas for Hecke eigenforms on $\Gamma_1$ \cite[Theorem 1.1]{Bachmann20} (in contrast to Theorem \ref{thm:Hecke}, his formula does not give an expression in terms of a basis).
As a corollary, one can obtain $\Q$-linear relations of $\hat{\zeta}(r,s)$'s from even period polynomials.

\begin{theorem}\label{thm:Bachmann}
For $P\in W_{k-2}^+$, let $a_{r,s}$ be as in Theorem \ref{thm:GKZ}.
Then we have
\[ \sum_{r+s=k} a_{r,s} \, \hat{\zeta}(r,s) \equiv 0 \bmod{\Q\zeta(k)}.\]
\end{theorem}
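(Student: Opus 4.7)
The plan is to lift the claimed congruence to the upper half-plane via a $q$-analogue of $\hat{\zeta}(r,s)$ and then read off constant terms, mirroring the double Eisenstein series interpretation in \S 3.2.

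First, following Bachmann, attach to each pair $(r,s)$ with $r\ge1,s\ge2$ a holomorphic function $\mathfrak{g}_{r,s}(z)$ on the upper half-plane whose Fourier expansion at $i\infty$ is of the form $\hat{\zeta}(r,s)+\sum_{n\ge1}\alpha_{r,s}(n)q^n$. This is the $q$-analogue of $\hat{\zeta}$, playing the role for $\hat{\zeta}(r,s)$ that the regularized double Eisenstein series $G^{\rm reg}_{r,s}(z)$ plays for $\zeta(r,s)$ in \S 3.2, but with the summation region $0<m<n$ (rather than the strict lattice ordering used for $G_{r,s}$) dictating a different type of bi-divisor sum in the non-constant coefficients.

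Second, for $P\in W_{k-2}^+$, set $F_P(z):=\sum_{r+s=k}a_{r,s}\,\mathfrak{g}_{r,s}(z)$ and show that $F_P\in M_k(\Gamma_1)$. Periodicity $F_P(z+1)=F_P(z)$ is immediate from the $q$-expansion. For the modular transformation $F_P(-1/z)=z^k F_P(z)$, one expands $\mathfrak{g}_{r,s}(-1/z)$ via a Lipschitz-type formula adapted to the summation region of $\hat{\zeta}$; the non-modular obstruction $F_P(-1/z)-z^k F_P(z)$ is a polynomial in $z$ of degree at most $k-2$, whose coefficients — repackaged via the defining recipe $P(X+Y,X)=\sum \binom{k-2}{r-1}a_{r,s}X^{r-1}Y^{s-1}$ — assemble into $P\big|(1+S)$ and $P\big|(1+U+U^2)$. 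Both vanish because $P\in W_{k-2}^+$, so $F_P$ is genuinely modular of weight $k$.

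Third, conclude by comparing constant terms. Since $M_k(\Gamma_1)=\C G_k\oplus S_k(\Gamma_1)$, every weight-$k$ modular form on $\Gamma_1$ has constant term a rational multiple of $\zeta(k)$ (cusp forms contribute $0$; $G_k$ contributes $\zeta(k)$). Reading off the constant term of $F_P$ yields the desired congruence
\[
\sum_{r+s=k} a_{r,s}\,\hat{\zeta}(r,s) \in \Q\,\zeta(k).
\]

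The main obstacle is the modularity verification in the second step: $\mathfrak{g}_{r,s}$ is not individually modular, and identifying the non-modular obstructions with the defining relations of $W_{k-2}^+$ requires a careful Lipschitz- and divisor-sum manipulation. This is the technical core of \cite{Bachmann20}, which in fact establishes the stronger identification $F_{P_f^+}\propto f$ for a Hecke eigenform $f\in S_k(\Gamma_1)$; Theorem \ref{thm:Bachmann} is the constant-term corollary of that statement, applied to arbitrary $P\in W_{k-2}^+$ (so the Eisenstein case $P=X^{k-2}-Y^{k-2}$ is covered as well, since $F_P$ then lands in $\C G_k$).
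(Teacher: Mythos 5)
Your proposal follows essentially the same route as the paper, which simply invokes Bachmann's construction of a $q$-analogue of $\hat{\zeta}(r,s)$ and his explicit modular-form identity \cite[Theorem 1.1]{Bachmann20}, and then reads the stated congruence off the constant term of the resulting element of $M_k(\Gamma_1)$. The one imprecision is in your third step: an arbitrary element of $M_k(\Gamma_1)$ has constant term only in $\C\,\zeta(k)$, so to land in $\Q\,\zeta(k)$ you must also use that the non-constant Fourier coefficients of $F_P$, suitably normalized by a power of $2\pi i$, are rational, forcing the Eisenstein component of $F_P$ to carry a rational coefficient.
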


Note that the resulting formula in Theorem \ref{thm:Bachmann} is similar to, but essentially different from Example \ref{ex:Hirose} (Theorem \ref{thm:Hirose} (ii)).

\begin{example}
Taking $P=X^{k-2}-Y^{k-2}$, we get
\[ \sum_{\substack{r+s=k\\ s\ge2}}\hat{\zeta}(r,s)=\frac{1}{2^{k-1}} \zeta(k).\]
The corresponding relations to $X^2Y^2(X^2-Y^2)^3\in W_{10}^{+,0}$ and $X^2Y^2(X^2-Y^2)^3(2X^4-X^2Y^2+2Y^4)\in W_{14}^{+,0}$ are
\begin{equation}\label{eq:zetahat}
\begin{aligned} 
\frac{1639}{2^8.691}\zeta(12)&=14\hat{\zeta}(3,9)+42\hat{\zeta}(4,8)+75\hat{\zeta}(5,7)+95\hat{\zeta}(6,6)+84\hat{\zeta}(7,5)+42\hat{\zeta}(8,4),\\
\frac{58703}{2^{12}.3617}\zeta(16)&=66 \hat{\zeta}(3,13)+198 \hat{\zeta}(4,12)+375 \hat{\zeta}(5,11)+555 \hat{\zeta}(6,10)+686
   \hat{\zeta}(7,9)\\
 &+728 \hat{\zeta}(8,8)+675 \hat{\zeta}(9,7)+555 \hat{\zeta}(10,6)+396
   \hat{\zeta}(11,5)+198 \hat{\zeta}(12,4).
\end{aligned}
\end{equation}
For the explicit formulas for the coefficients of $\zeta(k)$ in the above relations, see \cite{Bachmann20}.
\end{example}

If $P\in W_{k-2}^{+,0}$, then we have $a_{1,k-1}=a_{2,k-2}=a_{k-3,3}=a_{k-2,2}=a_{k-1,1}=0$.
Therefore, Theorem \ref{thm:Bachmann} for the case $P\in W_{k-2}^{+,0}$ gives a $\Q$-linear relation among $\zeta(k)$ and $\hat{\zeta}(r,s) \ (r\ge3,s\ge4)$ of weight $k$.
On the other hand, we can observe that every $\Q$-linear relation among them does not come from Theorem \ref{thm:Bachmann}, so the situation is different from the cases $\zeta(odd_{\ge1},odd_{\ge3})$ and $\zeta(odd_{\ge3},even_{\ge4})$.
Here is the list of the numerical dimension of the $\Q$-vector space spanned by $\zeta(k)$ and $\hat{\zeta}(r,s) \ (r\ge3,s\ge4)$ of weight $k$.

\begin{center}
\begin{tabular}{c|cccccc}
$k$ & 8 & 10 & 12 & 14 & 16 & 18\\ \hline
$\sharp$ of generators & 3& 5 & 7 & 9 & 11 &13\\ \hline
dimension & 3 & 4 & 5 & 6 & 7 & 8\\ \hline
\end{tabular}
\end{center}

\subsection{Conjectural period polynomial relations for double $\widetilde{T}$-values}

For positive integers $k_1,\ldots,k_d$, Kaneko and Tsumura \cite{KanekoTsumura} introduced the multiple $\widetilde{T}$-value $ \widetilde{T} (k_1,\ldots,k_d)$ defined by
\[ \widetilde{T} (k_1,\ldots,k_d):=2^d \sum_{\substack{0<m_1<\cdots<m_d\\m_j\equiv j\bmod 2\\j=1,2,\ldots,d}} \frac{(-1)^{\frac{m_d-d}{2}}}{m_1^{k_1}\cdots m_d^{k_d}}.\]
Similarly to multiple zeta values, the multiple $\widetilde{T}$-value has an iterated integral expression with the integrands $2dt/(1+t^2)$ and $dt/t$ (cf.~\cite[Proposition 2.1]{KanekoTsumura}).
Thus, the $\Q$-vector space spanned by all multiple $\widetilde{T}$-values forms a $\Q$-algebra with the product given by the shuffle product.
The integral representation also leads to an expression in terms of multiple zeta values of level 4.
For example, we have
\[ \widetilde{T}(r,s)=-Li\tbinom{1,i}{r,s}+Li\tbinom{-1,-i}{r,s} + Li\tbinom{-1,i}{r,s} - Li\tbinom{1,-i}{r,s}.\]

Before going to a modular relation for double $\widetilde{T}$-values found by 
Kaneko and Tsumura, let us observe the numerical dimension of the $\Q$-vector space spanned by all double $\widetilde{T}$-values of weight $k$.
Set
\[  \mathcal{D}\widetilde{\mathcal{T}}_k := \left\langle \widetilde{T}(k-s,s)\ \middle|\ 1\le s\le k-1\right\rangle_\Q.\]
\begin{center}
\begin{tabular}{c|cccccccccccccc}
$k$ &2& 3&4&5&6&7&8&9&10&11&12&13&14\\ \hline
$\sharp$ of generators &1& 2&3&4&5&6&7&8&9&10&11&12&13\\\hline
$\dim_\Q \mathcal{D}\widetilde{\mathcal{T}}_k $ &1&2&3&4&4&6&6&8&7&10&9 &12 &10
\end{tabular}
\end{center}
From this and further data, for $k\ge2$, we may expect that
\begin{equation}\label{eq:dim_conj_tilde{T}}
\dim_\Q \mathcal{D}\widetilde{\mathcal{T}}_k \stackrel{?}{=} \begin{cases}k-1-\left[ \frac{k-2}{4}\right] & k:{\rm even},\\ k-1 &k:{\rm odd}. \end{cases}.
\end{equation}
Note that the term $k-1$ is the number of generators of the space $\mathcal{D}\widetilde{\mathcal{T}}_k$, and that for $k\ge2$ even, we have
\[ \left[ \frac{k-2}{4}\right]=\dim S_k(\Gamma_0(4))-\dim S_k(\Gamma_0(2)).\]
This implies that the values $\widetilde{T}(r,s)$ of weight $k$ will satisfy $\dim S_k(\Gamma_0(4))-\dim S_k(\Gamma_0(2))$ relations over $\Q$.

To describe such relations, for each even integer $w\ge2$ and $1\le j\le w/2$, define the polynomial $\widetilde{S}_{w,j}(X,Y)\in V_w^+$ by
\begin{align*}
\widetilde{S}_{w,j}(X,Y)&:=\frac{4^{w-2j+1}}{w+2-2j} B_{w,w+2-2j}(Y/4,X)-\frac{1}{2j} B_{w,2j}(X,Y) \\
& \qquad -\frac{(w+2) B_{2j}B_{w+2-2j}}{2j (w+2-2j)B_{w+2}} \left(\frac{1-2^{-2j}}{1-2^{-w-2}}\frac{X^w}{4}- \frac{1-2^{-w-2+2j}}{1-2^{-w-2}}\frac{Y^w}{4^{2j}}\right),
\end{align*}
where we set 
\[B_{w,n}(X,Y):=\sum_{\substack{0\le j\le n\\ j:{\rm even}}} \binom{n}{j} X^{n-j}Y^{w-n+j}.\]
Note that $\widetilde{S}_{w,j}(X,Y)$ is obtained from $r^+(R_{\Gamma_0(4),w,2j-1}) \ (1\le j \le w/2)$ which appear in the context of ``rational periods of cusp forms on $\Gamma_0(N)$" studied by Fukuhara and Yang \cite[Theorem 1.1]{FukuharaYang}.
Let $W_{w,4}^+$ denote the $\Q$-vector subspace of $V_w^+$ spanned by $\widetilde{S}_{w,j}(X,Y) \ (1\le j\le w/2)$.
For $w\ge2$ even, it was shown in \cite[Corollary 1.9]{FukuharaYang2} that
\[\dim_\Q W_{w,4}^+ = \dim_\C S_{w+2}(\Gamma_0(4))=\frac{w}{2}-1.\]

\begin{conjecture}\label{conj:KT-conjecture}{\rm \cite[Conjecture 2.12, 1)]{KanekoTsumura}}
Let $k\ge6$ even.
For $P\in W_{k-2,4}^+$, let $Q(X,Y)=P(X+Y,-2X+2Y)$ and set $Q^+=\frac12 Q\big|(1+\delta)\in V_{k-2}^+$.
Then we have that \[\dim_\Q \langle Q^+ \mid P\in W_{k-2,4}^+\rangle_\Q \stackrel{?}{=} \dim_\C S_k(\Gamma_0(4))- \dim_\C S_k(\Gamma_0(2)).\]
Moreover, define $d_{r,s}\in \Q$ by 
\[ \sum_{r+s=k} \binom{k-2}{r-1}d_{r,s}X^{r-1}Y^{s-1} :=Q^+(X+Y,X) .\]
Then we have that
\[ \sum_{r+s=k} d_{r,s}\widetilde{T}(r,s) \stackrel{?}{=} 0.\]
\end{conjecture}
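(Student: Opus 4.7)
The plan is to adapt the approach that proved Theorem \ref{thm:GKZ} and Theorem \ref{thm:Bachmann} to the level-$4$ setting appropriate for $\widetilde{T}$-values. The two ingredients I would combine are (a) a regularized double shuffle structure for $\widetilde{T}(r,s)$ and (b) a $q$-analog / modular forms comparison that inputs cusp forms on $\Gamma_0(4)$.

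First, I would establish the double shuffle relations among $\widetilde{T}(r,s)$. Using the iterated integral representation of $\widetilde{T}(k_1,\ldots,k_d)$ with differential forms $2dt/(1+t^2)$ and $dt/t$, together with the expression of $\widetilde{T}(r,s)$ as a $\Q$-linear combination of level-$4$ polylogarithms $Li\tbinom{\pm1,\pm i}{r,s}$, the shuffle and harmonic (stuffle) products yield a formal double $\widetilde{T}$-space analogous to $\mathcal{D}_k$. Next I would construct a $q$-series lift $\widetilde{G}_{r,s}(z)$ of $\widetilde{T}(r,s)$, adapted to $\Gamma_0(4)$, that plays the role of Bachmann's $q$-analog of $\hat{\zeta}(r,s)$ (or, equivalently, Gangl-Kaneko-Zagier's double Eisenstein series); the key feature required is that its constant term equals $\widetilde{T}(r,s)$ and that linear combinations of the $\widetilde{G}_{r,s}(z)$ span a space containing $S_k(\Gamma_0(4))$. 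With the dictionary in place, the substitution $Q(X,Y)=P(X+Y,-2X+2Y)$ together with the projection $Q^+=\tfrac12 Q|(1+\delta)$ should emerge as the polynomial incarnation of the change of variables that identifies cuspidal rational periods on $\Gamma_0(4)$ (via the Fukuhara-Yang parameterization of $W_{w,4}^+$) with the coefficients $d_{r,s}$ in a Popa-type decomposition of cusp forms into products of two Eisenstein series of level $4$. Comparing the constant terms of this identity would then yield the claimed relation $\sum_{r+s=k} d_{r,s}\widetilde{T}(r,s)=0$. For the dimension half, one shows that old forms pulled back from $\Gamma_0(2)$ produce a $Q^+$ that vanishes (since the corresponding rational periods factor through a subspace invariant under the larger group), so the map $P\mapsto Q^+$ has kernel of dimension $\dim_\C S_k(\Gamma_0(2))$, giving the image dimension $\dim_\C S_k(\Gamma_0(4))-\dim_\C S_k(\Gamma_0(2))$.

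The main obstacle I expect is the Popa-type decomposition on $\Gamma_0(4)$: Popa's formula was built specifically for Hecke eigenforms on $\Gamma_1$, and extending it to $\Gamma_0(4)$ (and, crucially, showing that only the \emph{new} part contributes nontrivially after the $Q^+$ projection) requires substantial work on the rational-period side, likely drawing on Fukuhara-Yang's machinery. A secondary obstacle is that the first part of the conjecture, the dimension equality, hides a linear-independence statement: showing $\dim\langle Q^+\rangle$ is exactly $\dim S_k(\Gamma_0(4))-\dim S_k(\Gamma_0(2))$ (not just $\le$) rests on the injectivity of Eichler-Shimura type maps in the new-form direction, which is cleanest in the motivic setting but would need to be combined with explicit coset computations to pin down the substitution matrix $\left(\begin{smallmatrix}1&1\\-2&2\end{smallmatrix}\right)$. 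I would expect to prove only the relation half (part 2) unconditionally, while the dimension claim in part 1 would likely require either a motivic-period input analogous to that used in \cite{MaTasaka21,LiLiu} or numerical verification, since a full converse would be at least as hard as the level-$4$ analog of Zagier's dimension conjecture \eqref{eq:dim_conj_tilde{T}}.
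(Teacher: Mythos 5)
This statement is presented in the paper as a \emph{conjecture} (Kaneko--Tsumura, Conjecture 2.12, 1)); the paper contains no proof of either half, only a heuristic Remark and two surrounding open Problems, so your proposal is not being measured against a hidden argument --- it stands or falls on its own, and as written it is a research program with the two decisive steps left open. The relation half of your plan hinges on a Popa-type rational decomposition of cusp forms on $\Gamma_0(4)$ into products of two Eisenstein series, compatible with the Fukuhara--Yang periods and with the substitution $Q(X,Y)=P(X+Y,-2X+2Y)$ followed by $\tfrac12 Q\big|(1+\delta)$. No such decomposition is known: Popa's formula \cite{Popa} is specific to Hecke eigenforms on $\Gamma_1$, and even the prerequisite regularization of the double Eisenstein series $\widetilde{H}_{r,s}(z)$ to all $r,s\ge1$ --- needed before $\sum_{r+s=k} d_{r,s}\widetilde{H}_{r,s}(z)$ can be formed, let alone shown to be a cusp form on $\Gamma_0(4)$ --- is itself posed in the paper as an open Problem (with \cite{YuanZhao16} and \cite{KanekoTasaka13} suggested as possible tools). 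Without these inputs the ``compare constant terms'' step has no identity to act on. The dimension half has the same character: since $\dim_\Q W_{k-2,4}^+=\dim_\C S_k(\Gamma_0(4))$, your claim amounts to showing that the kernel of $P\mapsto Q^+$ is \emph{exactly} the $\Gamma_0(2)$-old part, and you assert both inclusions without a mechanism for either; ``injectivity of Eichler--Shimura in the new-form direction'' does not by itself control the explicit linear map given by the substitution matrix $\left(\begin{smallmatrix}1&1\\-2&2\end{smallmatrix}\right)$ composed with the $\delta$-symmetrization.

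For orientation, the route the paper itself gestures at for the relation half is different from yours: pass to motivic $\widetilde{T}$-values, compute the reduced Goncharov coproduct $\overline{\Delta}\big(\widetilde{T}^{\mathfrak{m}}(k_1,k_2)\big)$ explicitly, and reduce $\sum_{r+s=k} d_{r,s}\widetilde{T}(r,s)\equiv 0$ (modulo the $k$th power of $2\pi i$, with the weighted sum formula handling the $\zeta(k)$-ambiguity) to the finite-dimensional linear-algebra statement that the vector $(d_{r,s})$ is a left annihilator of the explicit matrix $C_k=[C_k':C_k'']$. Even that is only claimed to be attainable ``with tedious calculation,'' and the accompanying dimension prediction for $\ker C_k$ is itself conjectural. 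If you want to make genuine progress, I would attack that annihilator statement directly in each weight rather than build the full level-4 Eisenstein/Popa machinery, which is a substantially larger undertaking and would in any case only deliver an inequality for the dimension half, not the stated equality.
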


Conjecture 2.12 in \cite{KanekoTsumura} includes other kinds of modular relations of level 2, but we only focus on the above case.

\begin{example}
For the case
\[ \widetilde{S}_{4,1}(X,Y)=- \frac12 X^4 +\frac12 X^2Y^2 -\frac{1}{32}Y^4 \in W_{4,4}^+,\]
we have $Q^+(X,Y)=X^4-10X^2Y^2+Y^4$.
This leads to the conjectural relation of the form
\[ 24 \widetilde{T}(1, 5) + 12 \widetilde{T}(2, 4) + 2 \widetilde{T}(3, 3) - 3 \widetilde{T}(4, 2) - 3 \widetilde{T}(5, 1)=0.\]
\end{example}

\begin{remark}
To prove the upper bound of the dimension conjecture \eqref{eq:dim_conj_tilde{T}}, Brown's method using the theory of motivic multiple zeta values (cf.~\cite{Brown12}) would be applicable.
Let us sketch an outline without going into details.
Denote by $\widetilde{T}^{\mathfrak{m}}(r,s)$ the motivic version of double $\widetilde{T}$-values.
An advantage of the use of the motivic version is the fact that for $\xi=\sum a_{k_1,k_2}\widetilde{T}^\mathfrak{m}(k_1,k_2)$, the $\Q$-linear combination $\xi$ is a constant multiple of the single zeta value $\zeta^{\mathfrak{m}}(k)$ (or rather, the $k$th power of the motivic $2\pi i$) if and only if $\overline{\Delta}(\xi) = 0$, where $\overline{\Delta}(x):=\Delta (x) - 1\otimes x -x\otimes 1$ is the reduced Goncharov coproduct $\Delta$.
Here the Goncharov coproduct can be computed explicitly using formal iterated integrals;
for our case, we have
\begin{align*}
\overline{\Delta}\big(\widetilde{T}^{\mathfrak{m}} & (k_1,k_2)\big)
= T^{\mathfrak{a}}(k_1)\otimes \zeta_R^{\mathfrak{m}}(k_2)\\
&+\sum_{l_1+l_2=k_1+k_2} \left\{(-1)^{k_1}\binom{l_1-1}{k_1-1}T^{\mathfrak{a}}(l_1)\otimes \zeta_R^{\mathfrak{m}} (l_2)+(-1)^{l_1-k_2}\binom{l_1-1}{k_2-1} \widetilde{T}^{\mathfrak{a}} (l_1)\otimes \widetilde{T}^{\mathfrak{m}}(l_2) \right\},
\end{align*}
where $\zeta_R^\mathfrak{m}$ and $T^\mathfrak{m}$ are motivic lifts of the values $\zeta_R(k)=\frac{1}{4^{k-1}}(2^{k-1}-1)\zeta(k)$ and $ T(k)=2(1-2^{-k})\zeta(k)$.
Here by $T^\mathfrak{a}$ and $\widetilde{T}^\mathfrak{a}$ we denote $T^\mathfrak{m}$ and $\widetilde{T}^\mathfrak{m}$ modulo the motivic $2\pi i$, respectively.
We have $T^\mathfrak{a}(2r)=0$ and $\widetilde{T}^\mathfrak{a}(2r-1)=0$ for any $r\ge1$.
Since the terms $T^{\mathfrak{a}}(l_1)\otimes \zeta_R^{\mathfrak{m}} (l_2)$ $(l_1:{\rm odd})$ and $\widetilde{T}^{\mathfrak{a}} (l_1)\otimes \widetilde{T}^{\mathfrak{m}}(l_2)$ $(l_1:{\rm even})$ are linearly independent over $\Q$, every left annihilator $(a_{k_1,k_2})$ of the $(k-1)\times (k+k/2-4)$ matrix $C_k := [{C}_k':C_k'']$ gives rise to the relation $\sum a_{k_1,k_2}\widetilde{T}^\mathfrak{m}(k_1,k_2)\equiv 0$, where the entries of the matrices $C_k'$ and $C_k''$ are obtained from the coefficients of $T^{\mathfrak{a}}(l_1)\otimes \zeta_R^{\mathfrak{m}} (l_2)$ and $\widetilde{T}^{\mathfrak{a}} (l_1)\otimes \widetilde{T}^{\mathfrak{m}}(l_2)$ in $\overline{\Delta}\big(\widetilde{T}^{\mathfrak{m}}(k_1,k_2)\big)$, respectively.
More precisely, $C_k' $ is the $(k-1)\times (k-3)$ matrix given by
\[ C_k' := \left( \delta_{l_1,k_1}+(-1)^{k_1}\binom{l_1-1}{k_1-1} \right)_{\begin{subarray}{c} k_1+k_2=k, k_i\ge1\\ l_1+l_2=k, l_i\ge2 \end{subarray}},\]
and 
$C_k''$ is the $(k-1)\times(k/2-1)$ matrix defined by
\[C_k'': =  \left( (-1)^{l_1-k_2}\binom{l_1-1}{k_2-1} \right)_{\begin{subarray}{c} k_1+k_2=k, k_i\ge1\\ l_1+l_2=k, l_i\ge2:{\rm even} \end{subarray}}\]
where the rows and columns are indexed by $(k_1,k_2)$ and $(l_1,l_2)$.
For example, we have
\[{C}_4'=\left(
\begin{array}{c}
 -1 \\
 2 \\
 0 \\
\end{array}
\right),\ {C}_6'=
\left(
\begin{array}{ccc}
 -1 & -1 & -1 \\
 2 & 2 & 3 \\
 0 & 0 & -3 \\
 0 & 0 & 2 \\
 0 & 0 & 0 \\
\end{array}
\right),\ {C}_8'=
\left(
\begin{array}{ccccc}
 -1 & -1 & -1 & -1 & -1 \\
 2 & 2 & 3 & 4 & 5 \\
 0 & 0 & -3 & -6 & -10 \\
 0 & 0 & 2 & 4 & 10 \\
 0 & 0 & 0 & 0 & -5 \\
 0 & 0 & 0 & 0 & 2 \\
 0 & 0 & 0 & 0 & 0 \\
\end{array}
\right),
\]
and 
\[ {C}_4''=\left(
\begin{array}{c}
 0 \\
 1 \\
 -1 \\
\end{array}
\right),\ {C}_6''=\left(
\begin{array}{cc}
 0 & 0 \\
 0 & 1 \\
 0 & -3 \\
 1 & 3 \\
 -1 & -1 \\
\end{array}
\right),\ 
{C}_8''= \left(
\begin{array}{ccc}
 0 & 0 & 0 \\
 0 & 0 & 1 \\
 0 & 0 & -5 \\
 0 & 1 & 10 \\
 0 & -3 & -10 \\
 1 & 3 & 5 \\
 -1 & -1 & -1 \\
\end{array}
\right).\]
For $k\ge4$ even, we observed the dimension of the $\Q$-vector space $\ker C_k$ of left annihilators of the matrix $C_k$ and it is expected to be
\[\dim_\Q \ker C_k \stackrel{?}{=} \left[ \frac{k-2}{4} \right]+1= \dim S_k(\Gamma_0(4))-\dim S_k(\Gamma_0(2)) +1.\]
Note that, according to \eqref{eq:dim_conj_tilde{T}}, the above expectation would also imply $\zeta(k) \in \mathcal{D}\widetilde{\mathcal{T}}_k$, but this is already known from the weighted sum formula
\[ \sum_{j=0}^{k-2}2^{k-j-2}\widetilde{T}(j+1,k-1-j) + \widetilde{T}(k-1,1)=(k-1)T(k).\]
See the end of \S2.5 in \cite{KanekoTsumura}.
With tedious calculation, it might be possible to prove that for $d_{r,s}$ in Conjecture \ref{conj:KT-conjecture}, its vector $(d_{r,s})$ becomes a left annihilator of $C_k$.
\end{remark}

\begin{problem}
Prove \eqref{eq:dim_conj_tilde{T}}.
\end{problem}

\begin{problem}
Mimicking the story of cuspidal relations, we consider double Eisenstein series
\[ \widetilde{H}_{r,s}(z) := 2^2 \sum_{\substack{0\prec 4\ell_1z+m_1\prec 4\ell_2z+m_2 \\m_i\equiv i\bmod 2,\ \ell_i,m_i\in \Z}} \frac{(-1)^{\frac{m_2-2}{2}}}{(4\ell_1z +m_1)^{r} (4\ell_2 z+m_2)^{s}}.\]
Its constant term of the Fourier expansion coincides with $\widetilde{T}(r,s)$.
We might expect that there are regularizations of $\widetilde{H}_{r,s}(z)$ for $r,s\ge1$ such that the linear combination $\sum_{r+s=k} d_{r,s}\widetilde{H}_{r,s}(z)$, where $d_{r,s}$ is defined in Conjecture \ref{conj:KT-conjecture}, is a cusp form on $\Gamma_0(4)$.
If so, characterize the subspace spanned by such cusp forms.
Note that we may apply some results of Yuan and Zhao \cite{YuanZhao16} to the regularization of $\widetilde{H}_{r,s}(z)$  (see also \cite{KanekoTasaka13} for $N=2$).
\end{problem}

\subsection{Period polynomial relations for colored double zeta values}

For $N\in \N$, let $\mu_N$ be the set of all $N$th roots of unity.
For $a_1,\ldots,a_d\in \Z/N\Z$ and $k_1,\ldots,k_d\in \N$ with $(k_d,a_d)\neq (1,1)$, we define the colored multiple zeta value $\zeta\tbinom{a_1,\ldots,a_d}{k_1,\ldots,k_d}$ relative to $\mu_N $ by
\[
 \zeta\tbinom{a_1,\ldots,a_d}{k_1,\ldots,k_d}: = \sum_{0<m_1<\cdots<m_d}\prod_{j=1}^d\frac{\eta_N^{a_jm_j}}{m_j^{k_j}},
\]
 where we let $\eta_N=e^{\frac{2\pi i }{N}}$.

Recently, for each $N\in \N$, Hirose \cite{Hirose} obtained modular relations for colored double zeta values relative to $\mu_N$.
His result is about a correspondence between relations in the formal ``colored" double zeta space and period polynomials for a congruence subgroup due to Pa\c{s}ol and Popa \cite{PasolPopa}, which generalizes Theorem \ref{thm:GKZ}.

Firstly, let us define the formal colored double zeta space.
For $N\ge1$ and $k\ge2$, let $A(N)=\{(a,b)\in (\Z/N\Z)^2\mid (a,b,N)=1\}$ and define the $\Q$-vector space
\[ \mathcal{D}_{k,N}:=\langle Z_k^c,Z_{r,s}^{a,b},P_{r,s}^{a,b}\mid r+s=k, r,s\ge1, c\in \Z/N\Z, (a,b)\in A(N)\rangle_\Q \]
spanned by symbols $Z_k^c,Z_{r,s}^{a,b},P_{r,s}^{a,b}$ satisfying the regularized double shuffle relations
\[ P_{r,s}^{a,b}=Z_{r,s}^{a,b}+Z_{s,r}^{b,a}+Z_{k}^{a+b} = \sum_{h+p=k} \left(\binom{p-1}{r-1}Z_{h,p}^{b-a,a}+\binom{p-1}{s-1}Z_{h,p}^{a-b,b} \right)  \]
for $(a,b)\in A(N)$ and $r,s\ge1$ with $r+s=k$.
The above relations are well-defined, because, for $(a,b)\in A(N)$, the pairs $(b-a,a)$ and $(a-b,b)$ are also in $A(N)$.
In a similar manner to Proposition \ref{prop:dsr}, we can show that colored double zeta values satisfy the same relations above (cf.~\cite{ArakawaK04,Racinet02}).
Hence, there is the natural surjection $\mathcal{D}_{k,N}\rightarrow \mathcal{DZ}_{k,N}$, where $\mathcal{DZ}_{k,N}$ is the $\Q$-vector space spanned by all colored multiple zeta values relative to $\mu_N$ of weight $k$ and depth $\le2$.
For latter purpose, denote by $\mathcal{P}_{k,N}^{\rm ev}$ the subspace of $\mathcal{D}_{k,N}$ generated by $Z_k^c+(-1)^k Z_k^{-c} \ (c\in \Z/N\Z)$ and
\[ P_{r,s}^{a,b}+(-1)^r P_{r,s}^{-a,b}+(-1)^sP_{r,s}^{a,-b}+(-1)^{r+s}P_{r,s}^{-a,-b} \quad  (r+s=k, \ (a,b)\in A(N)). \]
Note that the image of the space $\mathcal{P}_{k,N}^{\rm ev}$ under the map $Z\mapsto \zeta$ becomes $\Q(2\pi i)^k$.

Next, recall period polynomials for $\Gamma_1(N)$ due to Pa\c{s}ol and Popa \cite{PasolPopa}.
For simplicity, we only treat the case $w\ge0$ even.
For a polynomial-valued function $F:\Gamma_1(N)\backslash {\rm SL}_2(\Z) \rightarrow V_w$, define the right action of $\gamma\in {\rm SL}_2(\Z)$ by $(F\big|_\gamma)(C):=F(C\gamma^{-1})\big|\gamma$ for each representative $C$ for the right cosets $\Gamma_1(N)\backslash {\rm SL}_2(\Z)$.
Extending this action to the group ring $\Z[{\rm SL}_2(\Z)]$, for $w\ge0$ even and $N\in \N$, we let
\[ \mathcal{W}_{w,N}:= \{F:\Gamma_1(N)\backslash {\rm SL}_2(\Z) \rightarrow V_w\mid F\big|_{1+S}=F\big|_{1+U+U^2}=0 \ \mbox{and}\  F(-C)=F(C),\ \forall C\}.\]
Denote by $C_{a,b}$ the corresponding class to $(a,b)\in A(N)$ via the bijection $\Gamma_1(N)\backslash {\rm SL}_2(\Z) \rightarrow A(N), \ \Gamma_1(N)(\begin{smallmatrix}\ast&\ast\\a&b\end{smallmatrix})\mapsto (a,b)$.
Under this notation, every function $F\in \mathcal{W}_{w,N}$ is identified with the polynomial vector $(F(C_{a,b}))_{(a,b)\in A(N)}\in V_w^{|A(N)|}$ such that
\[0=F(C_{a,b}) +F(C_{a,b}S^{-1})\big|S=F(C_{a,b}) + F(C_{a,b}U^{-1})\big|U+F(C_{a,b}U^{-2})\big|{U^2} \]
and $F(C_{-a,-b})=F(C_{a,b})$ for all $(a,b)\in A(N)$. 
We set
\[ \mathcal{W}_{w,N}^{\pm} = \{ F\in \mathcal{W}_{w,N}\mid F(C_{a,-b}) \big|\delta=\pm F(C_{a,b}) \ \mbox{for each $(a,b)\in A(N)$}\}.\]
It follows that $\mathcal{W}_{w,1}^\pm$ is canonically isomorphic to $W_w^\pm$.
From the Eichler-Shimura theory, one can construct injections $r^\pm : S_{w+2}(\Gamma_1(N))\rightarrow \mathcal{W}_{w,N}^\pm\otimes_\Q\C$.

We now state modular relations for colored double zeta values relative to $\mu_N$ due to Hirose.

\begin{theorem}\label{thm:colored}
Let $k\ge4$ even.
For $F\in \mathcal{W}_{k-2,N}^+$ and $(a,b)\in A(N)$, define $e_{r,s}^{a,b}\in\Q$ by 
\begin{align*}
\sum_{r+s=k}\binom{k-2}{r-1}e_{r,s}^{a,b} X^{r-1}Y^{s-1}:=F(C_{a,-a+b})(X-Y,X)
\end{align*}
and set 
\[e_{r,s}^{a,b,{\rm ev}}=\frac12 (e_{r,s}^{a,b} + (-1)^r e_{r,s}^{a,b}), \quad e_{r,s}^{a,b,{\rm od}}=\frac12 (e_{r,s}^{a,b} - (-1)^r e_{r,s}^{a,b}).\]
Then we have 
\[e_{r,s}^{a,b,{\rm ev}}=(-1)^{r}e_{r,s}^{-a,b,{\rm ev}}=(-1)^{s}e_{r,s}^{a,-b,{\rm ev}} = e_{r,s}^{b,a,{\rm ev}} ,\quad e_{r,s}^{a,b,{\rm od}}=(-1)^{r+1}e_{r,s}^{-a,b,{\rm od}}=(-1)^{s+1}e_{r,s}^{a,-b,{\rm od}} \] 
and 
\[3 \sum_{\substack{r+s=k\\ (a,b)\in A(N)}} e_{r,s}^{a,b,{\rm od}} Z_{r,s}^{a,b} = - \sum_{\substack{r+s=k\\ (a,b)\in A(N)}} e_{r,s}^{a,b,{\rm ev}} Z_{r,s}^{a,b} - \sum_{\substack{r+s=k\\ (a,b)\in A(N)}}  e_{r,s}^{a,b} Z_{r+s}^{a+b} \in \mathcal{P}_{k,N}^{\rm ev}.\]
\end{theorem}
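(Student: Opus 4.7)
The plan is to generalize the proof of Theorem \ref{thm:GKZ} to level $N$, with $\mathcal{W}_{k-2,N}^+$ playing the role that $W_{k-2}^+$ played in Gangl--Kaneko--Zagier. The key dictionary is the observation that the substitution $(X,Y) \mapsto (X-Y,X)$ appearing in the definition of $e_{r,s}^{a,b}$ is precisely the polynomial right-action of $U = \begin{pmatrix}1&-1\\1&0\end{pmatrix}$, so that the generating polynomial $\sum_{r+s=k} \binom{k-2}{r-1} e_{r,s}^{a,b} X^{r-1}Y^{s-1}$ is the evaluation of $F\big|_U$ at an appropriate coset (namely $C_{b,-a}$). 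This identification allows each defining condition of $\mathcal{W}_{k-2,N}^+$ to be translated directly into an identity on the coefficients $e_{r,s}^{a,b}$.

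First I would establish the symmetries. The sign symmetries $e_{r,s}^{a,b,{\rm ev}} = (-1)^r e_{r,s}^{-a,b,{\rm ev}}$ and $e_{r,s}^{a,b,{\rm ev}} = (-1)^s e_{r,s}^{a,-b,{\rm ev}}$ follow from combining $F(-C) = F(C)$ with the ``$+$'' condition $F(C_{a,-b})\big|\delta = F(C_{a,b})$ and tracking the parity extraction in $r$: substituting into the defining formula shows that the polynomial transformation $(X,Y)\mapsto(X-Y,X)$ intertwines compatibly with $(X,Y)\mapsto(-X,Y)$ and the color sign-flips. The remaining exchange symmetry $e_{r,s}^{a,b,{\rm ev}}=e_{r,s}^{b,a,{\rm ev}}$ requires $F\big|_{1+S}=0$: after the change of variables, the action of $S$ on cosets swaps the two color labels while on polynomials it induces the $r\leftrightarrow s$ swap on even-degree coefficients.

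The main linear identity I would then extract from $F\big|_{1+U+U^2}=0$. Substituting the defining formula for $e_{r,s}^{a,b}$ into this three-term period relation and pairing each symbol $X^{r-1}Y^{s-1}$ with $Z_{r,s}^{a,b}$, the regularized double shuffle relations in $\mathcal{D}_{k,N}$ convert each $Z_{r,s}^{a,b}$-term into a sum of $P_{r,s}^{a',b'}$-symbols together with $Z_k^c$-contributions. The three cyclic $U$-orbits combine with the three summands of each double shuffle relation to produce the factor of $3$ on the left, exactly as in Theorem \ref{thm:GKZ}. Finally, regrouping coefficients by color orbits and invoking the symmetries established above, the $P$-outputs arrange into the generators $P_{r,s}^{a,b}+(-1)^rP_{r,s}^{-a,b}+(-1)^sP_{r,s}^{a,-b}+(-1)^{r+s}P_{r,s}^{-a,-b}$ of $\mathcal{P}_{k,N}^{\rm ev}$ and the single-$Z$ contributions pair into $Z_k^c+(-1)^kZ_k^{-c}$, giving the claimed membership.

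The main obstacle is the combinatorial bookkeeping in this last step. One must simultaneously track the cyclic $U$-orbit $(a,b) \mapsto (-b,a+b) \mapsto (-a-b,a)$ of color labels, the polynomial substitution $(X,Y)\mapsto(X-Y,X)$, and the binomial identities encoded by $C^p_{r,s}$ in \eqref{eq:C}. In the level 1 case this is a single clean computation carried out in \cite{GanglKanekoZagier06}. Here, however, each colored double shuffle relation defining $\mathcal{D}_{k,N}$ involves four translates of the color pair $(a,b)$, and verifying that they match up exactly with the $U$-rotation on cosets together with the sign symmetries of $\mathcal{W}_{k-2,N}^+$ requires a delicate matching of all these combinatorial data.
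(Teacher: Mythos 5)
First, a point of context: this note is a survey and does not prove Theorem \ref{thm:colored} at all --- it is quoted as a (weakened) form of Hirose's Theorem 3 in \cite{Hirose}, and the introduction explicitly defers all proofs to the original papers. So there is no in-paper argument to compare yours against; your proposal has to stand on its own, and judged that way it is a plausible strategy outline rather than a proof. The framing is right: $(X,Y)\mapsto(X-Y,X)$ is indeed the slash action of $U$, the coset bookkeeping $F(C_{a,-a+b})\big|U=(F\big|_U)(C_{b,-a})$ checks out, the $U$-orbit $(a,b)\mapsto(-b,a+b)\mapsto(-a-b,a)$ on colors is correct, and it is reasonable to expect $F\big|_{1+S}=0$ to govern the exchange symmetry and $F\big|_{1+U+U^2}=0$ to govern the main identity, in parallel with Theorem \ref{thm:GKZ}.

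The gap is that the decisive computation is not carried out but only described, and your last paragraph concedes exactly this. The heart of the theorem is the claim that, after weighting the colored double shuffle relations $P_{r,s}^{a,b}=Z_{r,s}^{a,b}+Z_{s,r}^{b,a}+Z_k^{a+b}=\sum_{h+p=k}\bigl(\binom{p-1}{r-1}Z_{h,p}^{b-a,a}+\binom{p-1}{s-1}Z_{h,p}^{a-b,b}\bigr)$ by the coefficients $e_{r,s}^{a,b}$ and summing over $(a,b)\in A(N)$, the color shifts $(a,b)\mapsto(b,a)$, $(b-a,a)$, $(a-b,b)$ on the double-shuffle side intertwine with the right-coset actions of $S$ and $U$ on $\Gamma_1(N)\backslash\mathrm{SL}_2(\Z)$ in exactly the way needed for the period conditions to collapse the sum, and that the surviving $P$- and $Z_k^c$-terms assemble into the specific generators of $\mathcal{P}_{k,N}^{\rm ev}$ (a statement with content: one must actually see the four-term combinations $P_{r,s}^{a,b}+(-1)^rP_{r,s}^{-a,b}+(-1)^sP_{r,s}^{a,-b}+(-1)^{r+s}P_{r,s}^{-a,-b}$ emerge, using the $\pm$-symmetries you list in the first step). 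None of this is verified. Moreover, your explanation of the factor $3$ (``three cyclic $U$-orbits combine with the three summands of each double shuffle relation'') is asserted rather than derived, and is not obviously the mechanism even in the level-$1$ case, where the $3$ arises from a specific recombination of the symmetrized and antisymmetrized parts of the weighted sum of relations. Until the matching of color shifts with coset actions is written out and the factor $3$ is actually extracted, the proposal is a credible plan, not a proof.
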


Hirose's statement in \cite[Theorem 3]{Hirose} is much stronger than the above, including the case $k$ odd.

\begin{example}
Let us illustrate the case $N=2$.
Since $A(2)=\{(0,1),(1,0),(1,1)\}$, letting $F_{a,b}=F(C_{a,b})$, we identify $F\in \mathcal{W}_{w,2}$ with $(F_{0,1},F_{1,0},F_{1,1})\in V_w^3$ such that
\[ F_{0,1}+F_{1,0}\big|S=0, \ F_{1,1}\big|(1+S)=0, \ F_{0,1}+F_{1,1}\big|U + F_{1,0}\big|U^2=0.\] 
For example, we see that $F=(x^6, -y^6, x^6 - y^6)\in \mathcal{W}_{6,2}^+$ (which lies in the coboundary part) and this gives 
\begin{align*}
&3 \big(Z^{0, 1}_ {1, 7}+ Z^{0, 1}_{3, 5} +Z^{0, 1}_{5, 3} +Z^{0, 1}_{7, 1}+Z^{1, 0}_{1, 7}+ Z^{1, 0}_ {3, 5} + Z^{1, 0}_{5, 3} -Z^{1, 1}_{7, 1}\big)\\
&=Z^{0}_{8} - Z^{1}_{8} +  Z^{0, 1}_{2, 6} + Z^{0, 1}_{4, 4} + Z^{0, 1}_{6, 2} + Z^{1, 0}_{2, 6} + Z^{1, 0}_{4, 4} + Z^{1, 0}_{2, 6}\ \longrightarrow \frac{3}{2}\zeta(8)  \quad \big(\mbox{under}\ {Z\longmapsto\zeta}\big).
 \end{align*}
Taking $F=\left(15 x^6-30 x^4 y^2+15 x^2 y^4,-15 x^4 y^2+30 x^2 y^4-15 y^6,45 x^4 y^2-45 x^2 y^4\right)\in \mathcal{W}_{6,2}^+$, we get
\begin{align*}
&3\big(15Z^{0, 1}_{1, 7}+13Z^{0, 1}_{3, 5} + 4 Z^{0, 1}_{5, 3} + 3 Z^{1, 0}_{3, 5} + 15 Z^{1, 0}_{5, 3} - Z^{1,1}_{3,5} -4Z^{1,1}_{5,3}\big)\\
&=2Z^{0}_{8} - 2 Z^{1}_{8} + 15 Z^{0,1}_{2,6} + 9 Z^{0,1}_{4,4} + 9 Z^{1,0}_{4,4} + 15 Z^{1,0}_{6,2} - 3 Z^{1,1}_{4,4}\ \longrightarrow \frac{1395}{256}\zeta(8)  \quad \big(\mbox{under}\ {Z\longmapsto\zeta}\big).
\end{align*}
\end{example}

\begin{problem}
As a corollary of Theorem \ref{thm:colored}, for each $F\in \mathcal{W}_{k-2,N}^+$ we obtain the relation of the form
\[\sum_{\substack{r+s=k\\ (a,b)\in A(N)}} e_{r,s}^{a,b,{\rm od}} \zeta^\shuffle\tbinom{a,b}{r,s} = c_F \cdot (2\pi i)^k,\]
where $\zeta^\shuffle$ denotes the shuffle regularized colored multiple zeta values. 
Similarly to Theorem \ref{thm:Refinement}, is there a simple formula for the constant $c_F$ when $F$ is cuspidal?
\end{problem}

\begin{problem}
Is Conjecture \ref{conj:KT-conjecture} derived from Theorem \ref{thm:colored}?
\end{problem}

\subsection*{Acknowledgments}
I would like to thank Professor Maki Nakasuji for giving me the opportunity to talk at the RIMS conference on ``Zeta functions and their representations."
I am also grateful to Professor Masanobu Kaneko for his corrections and comments.
This work is partially supported by
JSPS KAKENHI Grant Number 20K14294, the Nitto Foundation, and the Research Institute for Mathematical Sciences,
an International Joint Usage/Research Center located in Kyoto University.


\vspace{2ex}
\noindent
Koji Tasaka\\
School of Information Science \& Technology, \\
Aichi Prefectural University, \\
Nagakute 480-1198, JAPAN\\
E-mail address: tasaka@ist.aichi-pu.ac.jp

\end{document}